\documentclass[a4paper,oneside,10.5pt]{article}%
\usepackage{amsmath}
\usepackage{amsfonts}
\usepackage{indentfirst}
\usepackage{amssymb}
\usepackage{graphicx}%
\usepackage{appendix}

\bibliographystyle{plain}

\usepackage[colorlinks,citecolor=blue,urlcolor=blue]{hyperref}

\setcounter{MaxMatrixCols}{30}
\providecommand{\U}[1]{\protect \rule{.1in}{.1in}}

\pagenumbering{arabic}
\setlength{\textwidth}{150mm}
\setlength{\textheight}{220mm}
\headsep=15pt \topmargin=-5mm \oddsidemargin=0.46cm
\evensidemargin=0.46cm \raggedbottom
\newtheorem{theorem}{Theorem}[section]

\newtheorem{corollary}[theorem]{Corollary}

\newtheorem{lemma}[theorem]{Lemma}

\newtheorem{proposition}[theorem]{Proposition}
\newtheorem{remark}[theorem]{Remark}

\newenvironment{proof}[1][Proof]{\noindent \textbf{#1.} }{\  \rule{0.5em}{0.5em}}
\numberwithin{equation}{section}

\begin{document}
	
	\title{Stochastic Maximum Principle for Optimal Control of Anticipated Backward Stochastic Systems with Delays}
	\author{Guanwei Cheng\thanks{School of Mathematics, Shandong University, PR China, (guanwei.cheng@mail.sdu.edu.cn).}
		\thanks{This work was supported by National Natural Science Foundation of China (Grant No.12471450), and Taishan Scholar Talent Project Youth Project.}}
	\date{December 2025}
	\maketitle
	
	\textbf{Abstract}:  This paper investigates optimal control problems for delayed systems governed by Infinitely Anticipated Backward Stochastic Differential Equations (IABSDEs). Unlike existing frameworks limited to bounded delays, we introduce a generalized formulation utilizing $\sigma$-finite measures that accommodates both long-term memory effects and forward-looking anticipation. 
	Employing a new type of infinitely delayed stochastic differential equations as adjoint equations, we derive the necessary conditions of the maximum principle for optimal control. Under appropriate assumptions, the sufficiency of the maximum principle is also established. As illustrative examples, a climate policy model, a consumption optimization problem and a linear quadratic control problem are discussed, and all optimal controls are derived explicitly.\\
	\textbf{Keywords}: Stochastic delayed systems; Anticipated BSDEs; Maximum principle;  Duality.\\
	\textbf{MSC2020}: 34K50, 93C23, 93E20, 60H30.
		
		\addcontentsline{toc}{section}{\hspace*{1.8em}Abstract}
		
		\section{Introduction}
		
		
		Time-delay control systems have become fundamental mathematical frameworks for modeling phenomena where current states depend not only on present conditions but also on historical trajectories (see, e.g., Mohammed \cite{Mohammedbook1984}). These systems naturally arise in various real-world fields, including economics, finance, engineering, and biological processes (see \cite{Mohammedarticle2007, Mohammedbook1996}).
		
		
		
		In some cases, delayed optimization problems can be solved explicitly using the Dynamic Programming Principle (DPP) approach (see \cite{solvablecase}). However, DPP faces significant challenges when applied to general delay systems due to the infinite-dimensional nature of the associated Hamilton-Jacobi-Bellman equation, which is notoriously difficult to solve (see \cite{Gozzi2009, Gozzi2017}).
		
		
		
		An alternative approach, the Stochastic Maximum Principle (SMP), has been proposed to address optimal control problems for stochastic delay systems. Oksendal and Sulem \cite{Oksendal2} presented SMP by utilizing the Anticipated Backward Stochastic Differential Equation (ABSDE) as the	adjoint equation in duality with stochastic differential delay equations (SDDE). This duality was further explored in Peng and Yang \cite{peng09AP}, forming the theoretical foundation for deriving the SMP in systems described by SDDEs. 
		For example, Wu and Chen \cite{SDDE1wuzhenchenli} applied this duality to solve optimal control problems with point-wise delays, while Yu \cite{YU20122420} considered delayed systems with impulse control. Subsequent research has expanded these studies in various directions, see e.g. \cite{Dahl03072020, wangmeng25, SDDE123mixeddelay} and the references therein.

		From a modeling perspective, control problems for BSDEs have their own research interest. Delong and Imkeller \cite{Delong1} introduced BSDEs with time-delayed generators (BSDDE), while Chen and Huang \cite{BSDDE-12.delay-1type-1} investigated related delay systems with applications to insurance risk management, pricing, and hedging (see also Delong \cite{Delong2, Delong3}). Notably, ABSDE-driven optimization problems have been widely studied for their inherent after-effect phenomena, as observed in various research (e.g., \cite{FD-BA-1delay, FD-BA-1delay-impulse}).
		
		
		Despite these advances, existing studies mainly focus on systems with point-wise delays or anticipation structures.
		
		
		The mathematical literature distinguishes two primary types of delay formulations: discrete delays, which capture fixed time lags, and moving-average delays, which integrate historical data over a period (see, e.g., \cite{BSDDE12delay}). The latter formulation is particularly useful in financial modeling \cite{Delong3} and control engineering \cite{SDDE-12delay}. Chen and Huang \cite{BSDDE-12.delay-1type-1} employed integral operators, such as $\int_{t-\delta}^t \phi(t, s) y(s) \alpha(d s)$, to unify point-wise and moving-average delays (or anticipations). There have been fruitful results on SMP for delayed control systems with such integral (see e.g. \cite{BSDDE-12delay-type1+IH, wangmeng25}). However, existing studies generally require the delay (or anticipation) window $\delta$ be sufficiently small or
		confined to fixed intervals to ensure well-posedness, thus limiting the applicability of these methods to systems exhibiting long-term memory effects.

		Motivated by these theoretical and applied considerations, this paper aims to investigate the SMP for a novel class of delay systems characterized by Infinitely Anticipated Backward Stochastic Differential Equations (IABSDEs):
		
		
		\begin{equation} \label{ABSDE with control}
			\left\{\begin{aligned}
				-dY(t)=&f\left(t, Y(t), E^{\mathcal{F}_t}\left[\int_{0}^{+\infty}\phi(t,t+\theta)Y(t+\theta)\alpha(d\theta)\right], \right.\\ 
				&\quad\quad \left.   Z(t),E^{\mathcal{F}_t}\left[\int_{0}^{+\infty}\phi(t,t+\theta)Z(t+\theta)\alpha(d\theta)\right],\right.\\
				&\quad\quad \left. 
				v(t), \int_{0}^{+\infty}\phi(t-\theta,t)v(t-\theta)\alpha(d\theta)\right)
				d t\\
				&\quad- Z(t) d W(t), \quad\quad\quad\quad\quad\quad\quad\quad\quad\quad\quad\quad\quad~  t \in[0, T]; \\
				Y(t) =  \xi&(t), Z(t) = \eta(t), \quad t\in[T, +\infty),\quad  v(t)=\varphi(t), \quad t\in(-\infty, 0].
			\end{aligned}\right.
		\end{equation}
		
		
		One important feature of system (\ref{ABSDE with control}) is that both the anticipated state terms and the delayed control term are presented in the integral form that contains a bounded density $\phi$ and $\sigma$-finite measure $\alpha$ on $[0,+\infty)$, enabling full information of unbounded past and future.
		
		
		Another interesting character emerges when we show that the adjoint equation associated with system (\ref{ABSDE with control}) formulates an Infinitely Delayed Stochastic Differential Equation (ISDDE), a class of memory processes with diverse physical and economic applications such as species growth and carbon release model (see \cite{ISDDEapplication1, ISDDEapplication2}).
		

		
		The main contribution of this paper are threefold: 
		\begin{itemize}
			\item \textbf{Well-posedness of IABSDEs}: We establish well-posedness of anticipated BSDEs with unbounded anticipation in integral form, overcoming the restrictions related to the size of the information window $\delta$ in prior work (see e.g., \cite{BSDDE-12.delay-1type-1, wangmeng25}). This enables modeling of long-term memory and aftereffects commonly encountered in economic and biological systems.
			
			
			\item \textbf{New duality for SMP}: We employ a novel duality between ISDDEs and IABSDEs to derive the SMP. This duality allows the system (\ref{ABSDE with control}) to account for both long historical dependence and anticipation simultaneously, which previously requires separate analysis.
			
			
			\item \textbf{Sufficiency of the maximum principle and explicit solvability}: We prove the sufficiency of the maximum principle under appropriate convexity assumptions. The theoretical results are further illustrated through several examples, for which the optimal controls admit explicit representations.

		\end{itemize}
		

		
		The remainder of this paper is organized as follows. Section \ref{chapter preliminary} presents preliminaries and some properties of Infinitely Anticipated BSDEs and Infinitely Delayed SDEs. In Section \ref{section 3}, we establish the necessary and sufficient conditions for the SMP. 
		Finally, Section \ref{chapter applications} 
		presents several application examples illustrating how the proposed
		framework covers different types of anticipation and delay mechanisms commonly encountered in practice.
		
		
		\section{Preliminary}\label{chapter preliminary}
		
		\indent
		Let $\left(\Omega, \mathcal{F}, P, \mathcal{F}_t, t \geqslant 0\right)$ be a complete filtered probability space such that $\mathcal{F}_0$ contains all $P$-null sets in $\mathcal{F}$, and suppose that the filtration $\mathcal{F}_t$ is generated by a $d$-dimensional standard Brownian motion $W=\left(W_t\right)_{t \geqslant 0}$. Let $T>0$ be the finite time horizon. For all $n \in \mathbb{N}$, denote the Euclidean norm on $\mathbb{R}^d$ by $|\cdot|$. 
		
		We first define some spaces in finite horizon: 
		\begin{itemize}
			\item $L^2\left(\mathcal{F}_t ; \mathbb{R}^d\right)=\Big\{\mathbb{R}^d$-valued $\mathcal{F}_t$-measurable random variables such that $E\left[|\xi|^2\right]<\infty\Big\}$;
			\item $\mathcal{M}_{\mathcal{F}}^2\left(0, T ; \mathbb{R}^d\right)=\Big\{\mathbb{R}^d$-valued and $\mathcal{F}_t$-adapted stochastic processes such that\\ $E\left[\int_0^T\left|\varphi_t\right|^2 d t\right]<\infty\Big\} ;$
			\item $\mathcal{S}_{\mathcal{F}}^2\left(0, T; \mathbb{R}^d\right)=\Big\{$continuous stochastic processes in $\mathcal{M}_{\mathcal{F}}^2\left(0, T ; \mathbb{R}^d\right)$ such that\\ $E\left[\sup _{0 \leqslant t \leqslant T}\left|\varphi_t\right|^2\right]<\infty\Big\}.$
		\end{itemize}
		
		For the stochastic processes in infinite horizon, we denote, for some given constant $\beta$: 
		\begin{itemize}
			\item $\mathcal{M}_{\mathcal{F}}^2\left(-\infty, T ; \mathbb{R}^d\right)=\Big\{\mathbb{R}^d$-valued and $\mathcal{F}_t$-adapted stochastic processes such that\\ $E\left[\int_{-\infty}^T\left|\varphi_t\right|^2 d t\right]<\infty\Big\} ;$
			\item $\mathcal{M}_{\mathcal{F}}^{2,\beta}\left(0, +\infty ; \mathbb{R}^d\right)=\Big\{\mathbb{R}^d$-valued and $\mathcal{F}_t$-adapted stochastic processes such that\\ $E\left[\int_0^{+\infty}e^{\beta t}\left|\varphi_t\right|^2 d t\right]<\infty\Big\} ;$
			\item $\mathcal{S}_{\mathcal{F}}^2\left(0, +\infty ; \mathbb{R}^d\right)=\Big\{$continuous stochastic processes in $\mathcal{M}_{\mathcal{F}}^2\left(0, +\infty ; \mathbb{R}^d\right)$ such that\\ $E\left[\sup _{0 \leqslant t \textless +\infty}\left|\varphi_t\right|^2\right]<\infty \Big\}.$
		\end{itemize}
		

		Consider the following Infinitely Anticipated BSDEs:
		\begin{equation} \label{ABSDE without control}
			\left\{\begin{aligned}
				-dY(t)=f&\left(t, Y(t), E^{\mathcal{F}_t}\left[\int_{0}^{+\infty}\phi(t,t+\theta)Y(t+\theta)\alpha(d\theta)\right], \right.\\ 
				&\quad~~ \left.  Z(t), E^{\mathcal{F}_t}\left[\int_{0}^{+\infty}\phi(t,t+\theta)Z(t+\theta)\alpha(d\theta)\right]\right)
				d t\\
				&- Z(t) d W(t), \quad\quad\quad\quad\quad\quad\quad\quad\quad\quad  t \in[0, T]; \\
				Y(t) =  \xi(t&), \quad Z(t) = \eta(t), \quad \quad\quad\quad\quad\quad\quad\quad\quad t\in[T, +\infty).
			\end{aligned}\right.
		\end{equation}
		where $\phi(\cdot, \cdot)$ is a bounded process and $\alpha$ is a $\sigma$-finite measure on $[0,+\infty)$. Denote by $\left|\alpha\right|$ the total variation of measure $\alpha$ and assume:
		
		\textbf{(H2.1)}  There exists constants $C_{\phi}>0$ and $C_{\alpha}>0$ such that 
		$$
		|\phi(\cdot,\cdot)| \leqslant C_{\phi}, 
		\quad\int_{0}^{+\infty}\left|\alpha\right|(d\theta) \leqslant C_{\alpha}.
		$$

		%
		We further assume for all $t\in[0,T]$, the generator $f: \Omega \times \mathbb{R}^d \times \mathbb{R}^d \times  \mathbb{R}^{m \times d}\times \mathbb{R}^{m \times d}$ $\rightarrow  \mathbb{R}^d$ is a $\mathcal{F}$-adapted function satisfying the following conditions: 
		
		\textbf{(H2.2)} There exists a constant $L>0$, such that for all $t \in[0, T], y, y^{\prime}, y_a, y_a^{\prime} \in \mathbb{R}^d, z, z^{\prime}, z_a, z_a^{\prime}\in \mathbb{R}^{m \times d}$, it follows that 
		$$
		\begin{aligned}
			\left|f\left(t, y,y_a, z, z_a\right)-f\left(t, y^{\prime}, y_a^{\prime}, z^{\prime},  z_a^{\prime}\right)\right| 
			\leqslant L\left(\left|y-y^{\prime}\right| + \left|y_a-y_a^{\prime}\right| +\left|z-z^{\prime}\right| + \left|z_a-z_a^{\prime}\right|\right).
		\end{aligned}
		$$
		
		\textbf{(H2.3)}  
		$$E\left[\int_0^T|f(t, 0,0,0,0)|^2 d t\right]<+\infty$$
		
		\begin{remark}
			The anticipation operator such as
			\begin{equation}\label{anticipation term}
				Y_{a}(t):=E^{\mathcal{F}_t}\left[\int_{0}^{+\infty}\phi(t,t+\theta)Y(t+\theta)\alpha(d\theta)\right]
			\end{equation}
			encompasses two principal types of anticipatory structures in stochastic systems:
			\begin{itemize}
				\item \textbf{Point-wise anticipation:} Characterized by $$E^{\mathcal{F}_{t}}[Y(t+\delta)],$$ 
				the point-wise type of anticipation was first introduced in \cite{peng09AP} and represents a fundamental type of anticipatory dependence, as explored in \cite{SDDE1wuzhenchenli, FD-BA-1delay}. 
				Such structure can be embedded into the general anticipation operator (\ref{anticipation term}) by specifying the measure $\alpha$ as a Dirac measure with lag $\delta$. Detailed applications of point-wise anticipation are discussed further in Section \ref{section example 2}.
				
				\item \textbf{Moving-average anticipation:} 
				\subitem
				\textbf{(i) Exponentially weighted}: Exponentially weighted structures are commonly employed in financial modeling, especially for optimizing discounted utility, as illustrated in \cite{SDDE-12delay, Oksendal2}. 
				The associated adjoint equation typically contains terms like
				$$
				E^{\mathcal{F}_t}\left[\int_{t}^{t+\delta}e^{-\lambda(s-t)}Y(s)ds\right],
				$$
				which is a special case of (\ref{anticipation term}) by setting $\alpha$ to be absolutely continuous with respect to the Lebesgue measure on $[0,\delta]$ with exponential density $\phi$.  We will further propose a climate policy example with delay and anticipation of such type in Section \ref{section example carbon}.
				
				\subitem
				\textbf{(ii) Uniformly weighted}: Another form of moving-average anticipation involves uniformly weighted anticipation terms (see Section 4.1 in \cite{BSDDE-12.delay-1type-1}):
				$$
				E^{\mathcal{F}_t}\left[\int_{t}^{T}Y(s)ds\right],
				$$
				which can be reformulated as
				$$
				E^{\mathcal{F}_t}\left[\int_{0}^T T Y(t+\theta) I_{[0 ,T-t]}(\theta) \alpha(d \theta)\right],
				$$
				where $\alpha$ represents a uniform measure on $[0,T]$ and $I$ denotes the indicator function. Section \ref{section example 1} provides a detailed application of this approach in a consumption optimization problem.
			\end{itemize}

		\end{remark}

		
		Now we study the well-posedness of the IABSDE (\ref{ABSDE without control}). To analyze this equation, it is convenient to place it within a broader class of anticipated backward stochastic systems of the form:
		
		
		\begin{equation} \label{general ABSDE}
			\left\{\begin{aligned}
				-dY(t)= f&\left(t, \{Y(r)\}_{r \in [t,+\infty)}, \{Z(r)\}_{r \in [t,+\infty)} \right) d t-  Z(t) d W_t, \quad &&t \in[0, T]; \\
				Y(t) =  \xi(t)&, \quad Z(t) =  \eta(t),\quad && t\in[T, +\infty).
			\end{aligned}\right.
		\end{equation}
		
		Equation (\ref{general ABSDE}) was introduced in Cheng and Yang \cite{ziji}, and its existence and uniqueness have been established under a suitable Lipschitz condition as follows:
		
		
		\textbf{(H2.4)} There exists a constant $L>0$ such that for all $t \in[0, T], Y(\cdot), Y^{\prime}(\cdot) \in \mathcal{M}_\mathcal{F}^2\left(t, +\infty; \mathbb{R}^d\right)$, $Z(\cdot), Z^{\prime}(\cdot) \in \mathcal{M}_\mathcal{F}^{2,\beta}\left(t, +\infty; \mathbb{R}^{d \times m}\right)$, and an arbitrary constant $\beta \geqslant 0$, it follows that
		\begin{equation} \nonumber    
			\begin{aligned}
				&E\left[\int_{t}^{T}\left|f(s,Y(\cdot),Z(\cdot))-f(s,Y^{\prime}(\cdot),Z^{\prime}(\cdot)) \right|^2 e^{\beta s} ds\right]\\
				&\quad \leqslant LE\left[\int_{t}^{+\infty}\left(\sup_{s \leqslant r \textless +\infty}\left|Y(r) - Y^{\prime}(r)\right|^2 + \left|Z(s) - Z^{\prime}(s)\right|^2\right)e^{\beta s} ds\right].
			\end{aligned}
		\end{equation} 
		
		The key observation is that the IABSDE (\ref{ABSDE without control}) can be embedded into the more general setting of (\ref{general ABSDE}) by the following lemma.
		
		
		\begin{lemma}\label{lemma connection}
			Under assumptions (H2.1) and (H2.2), the generator of the IABSDE (\ref{ABSDE without control}) satisfies the Lipschitz condition (H2.4) required for equation (\ref{general ABSDE}).
			
		\end{lemma}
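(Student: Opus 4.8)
The plan is to recognize the generator of \eqref{ABSDE without control} as the composition of the finite-dimensional map $f$ with the two anticipation operators, and then to upgrade the pointwise Lipschitz estimate (H2.2) to the functional estimate (H2.4) by controlling each anticipation term through the bounds in (H2.1). First I would cast \eqref{ABSDE without control} into the form \eqref{general ABSDE} by setting
$$\tilde f\big(s,\{Y(r)\}_{r\geqslant s},\{Z(r)\}_{r\geqslant s}\big)=f\big(s,Y(s),Y_a(s),Z(s),Z_a(s)\big),$$
where $Y_a(s)$ is the operator in \eqref{anticipation term} and $Z_a(s)=E^{\mathcal{F}_s}\big[\int_0^{+\infty}\phi(s,s+\theta)Z(s+\theta)\alpha(d\theta)\big]$. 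Writing $\Delta Y=Y-Y'$, $\Delta Z=Z-Z'$, applying (H2.2) pointwise in $(s,\omega)$ and the elementary inequality $(a+b+c+d)^2\leqslant 4(a^2+b^2+c^2+d^2)$ reduces the task to bounding the four squared increments $|\Delta Y(s)|^2$, $|Y_a(s)-Y_a'(s)|^2$, $|\Delta Z(s)|^2$, and $|Z_a(s)-Z_a'(s)|^2$ after integration against $e^{\beta s}\,ds$.

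The state anticipation is the easy part. Using the conditional Jensen inequality to move the absolute value inside, bounding $|\phi|\leqslant C_\phi$ and $|\Delta Y(s+\theta)|\leqslant\sup_{r\geqslant s}|\Delta Y(r)|$ (valid since $\theta\geqslant 0$), and using $\int_0^{+\infty}|\alpha|(d\theta)\leqslant C_\alpha$, I obtain
$$|Y_a(s)-Y_a'(s)|^2\leqslant C_\phi^2 C_\alpha^2\,E^{\mathcal{F}_s}\Big[\sup_{r\geqslant s}|\Delta Y(r)|^2\Big].$$
Integrating against $e^{\beta s}\,ds$ over $[t,T]$ and invoking the tower property removes the conditional expectation, yielding a bound by $C_\phi^2 C_\alpha^2\,E\big[\int_t^{+\infty}\sup_{r\geqslant s}|\Delta Y(r)|^2 e^{\beta s}ds\big]$, which is precisely the supremum term on the right-hand side of (H2.4); the term $|\Delta Y(s)|^2\leqslant\sup_{r\geqslant s}|\Delta Y(r)|^2$ is absorbed the same way.

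The main obstacle is the driver anticipation $Z_a$, because (H2.4) demands that it be dominated by the \emph{pointwise} quantity $|Z(s)-Z'(s)|^2$ integrated in $s$, not by a supremum, so a sup-bound would be too lossy. Instead I would apply the Cauchy--Schwarz inequality with respect to the finite measure $|\alpha|$ to get
$$\Big(\int_0^{+\infty}|\phi|\,|\Delta Z(s+\theta)|\,|\alpha|(d\theta)\Big)^2\leqslant C_\alpha C_\phi^2\int_0^{+\infty}|\Delta Z(s+\theta)|^2\,|\alpha|(d\theta),$$
and then combine conditional Jensen with the tower property to reach
$$E\Big[\int_t^T|Z_a(s)-Z_a'(s)|^2 e^{\beta s}ds\Big]\leqslant C_\alpha C_\phi^2\,E\Big[\int_0^{+\infty}\!\!\int_t^T|\Delta Z(s+\theta)|^2 e^{\beta s}ds\,|\alpha|(d\theta)\Big].$$
The decisive step is the change of variables $u=s+\theta$ together with Fubini: it produces the factor $e^{-\beta\theta}$, and since $\beta\geqslant 0$ and $\theta\geqslant 0$ we have $e^{-\beta\theta}\leqslant 1$, so the shifted integral is controlled by $\int_t^{+\infty}|\Delta Z(u)|^2 e^{\beta u}du$ uniformly in $\theta$. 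Integrating out $|\alpha|$ then gives the bound $C_\alpha^2 C_\phi^2\,E\big[\int_t^{+\infty}|\Delta Z(u)|^2 e^{\beta u}du\big]$, matching the driver term in (H2.4).

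Collecting the four contributions establishes (H2.4) with an explicit constant such as $4L^2(1+C_\phi^2 C_\alpha^2)$. The sign condition $\beta\geqslant 0$ is exactly what makes the time shift induced by the anticipation harmless and is the crux of the argument; everything else is a routine application of Jensen, Cauchy--Schwarz, and Fubini under the boundedness hypotheses (H2.1).
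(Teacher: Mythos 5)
Your proposal is correct and follows essentially the same route as the paper: the $Y$-anticipation is handled by the crude supremum bound with $|\phi|\leqslant C_\phi$ and the total-variation bound on $\alpha$, while the $Z$-anticipation is treated by Cauchy--Schwarz in $|\alpha|$, Fubini, and the change of variables that produces the factor $e^{-\beta\theta}\leqslant 1$ for $\beta\geqslant 0$. The only difference is that you spell out the reduction via (H2.2) and the elementary quadratic inequality, which the paper leaves implicit.
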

		\begin{proof}
			Since
			$$
			\begin{aligned}
				\left|Y_a(t)-Y_a^{\prime}(t)\right|&\leqslant E^{\mathcal{F}_t}\left[\int_{t}^{+\infty}\left|\phi(t,t+\theta)\right|\cdot\left|Y(t+\theta)-Y^{\prime}(t+\theta)\right|\left|\alpha\right|(d\theta)\right]\\
				&\leqslant C_{\alpha}C_{\phi}\cdot E^{\mathcal{F}_t}\left[\sup_{t \leqslant s \textless +\infty}\left|Y(s)-Y^{\prime}(s)\right|\right].
			\end{aligned}   	 
			$$ 	  
			Thus condition (H2.4) holds for anticipation term of $Y$. And for $Z$ anticipation term, for any constant $\beta \geqslant 0$,
			by Fubini's Theorem,
			$$
			\begin{aligned}
				E&\left[\int_{t}^{T} \left|Z_a(s)-Z_a^{\prime}(s)\right|^2 e^{\beta s} ds\right]\\
				&\leqslant C_{\alpha}C_{\phi}^2\cdot E\left[ \int_{t}^{T} \left(\int_{0}^{+\infty}\left|Z(s+\theta) - Z^{\prime}(s+\theta)\right|^2 \left|\alpha\right|(d\theta)\right)e^{\beta s}ds \right]\\
				&=C_{\alpha}C_{\phi}^2\cdot E\left[\int_{0}^{+\infty} \left(\int_{t+\theta}^{T+\theta} \left|Z(s) - Z^{\prime}(s)\right|^2e^{\beta s}ds\right) e^{-\beta \theta}\left|\alpha\right|(d\theta) \right]\\
				&\leqslant C_{\alpha}^2C_{\phi}^2\cdot E\left[\int_{t}^{+\infty} \left|Z(s) - Z^{\prime}(s)\right|^2e^{\beta s}ds \right].
			\end{aligned}
			$$   
		\end{proof}	     

		With Lemma \ref{lemma connection}, we therefore establish the well-posedness of equation (\ref{ABSDE without control}) directly from Theorem 4.1 in Cheng and Yang \cite{ziji}.
		

		\begin{theorem}\label{well-posedness IABSDE}
			Suppose (H2.1)-(H2.3) hold, then for any given terminal condition $\xi(\cdot)\in\mathcal{S}_\mathcal{F}^2\left(T, +\infty; \mathbb{R}^d\right)$, $\eta(\cdot)\in\mathcal{M}_{\mathcal{F}}^{2,\beta}\left(T, +\infty; \mathbb{R}^{d \times m}\right)$, the IABSDE (\ref{ABSDE without control}) has a unique solution $\left(Y(\cdot), Z(\cdot)\right) \in \mathcal{S}_\mathcal{F}^2\left(0, +\infty; \mathbb{R}^d\right) \times \mathcal{M}_\mathcal{F}^{2,\beta}\left(0, +\infty; \mathbb{R}^{d \times m}\right)$. Moreover, the solution satisfies the priori estimate:
			\begin{equation} \label{basic estimate of IABSDE}
				\begin{aligned}
					E&\left[\sup _{0 \leqslant t \leqslant T}\left|Y(t)\right|^2+\int_0^{T}\left|Z(t)\right|^2 d t\right]\\
					&\leqslant C E\left[\sup_{T \leqslant t \textless +\infty}\left|\xi(t)\right|^2+\int_{T}^{+\infty}e^{\beta t}\left|\eta(t)\right|^2 dt+\left(\int_0^T|f(t,0,0,0,0)| d t \right)^2\right],
				\end{aligned}
			\end{equation}
			where the constant $C$ depends only on $C_{\phi}, C_{\alpha}$ in (H2.1), $L$ in (H2.2) and $T$.
		\end{theorem}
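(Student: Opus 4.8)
The plan is to obtain the result as a direct corollary of the existence, uniqueness, and a priori estimate theorem for the general anticipated system (\ref{general ABSDE}) (Theorem 4.1 in \cite{ziji}), once the IABSDE (\ref{ABSDE without control}) has been recast into that abstract framework. The key observation is that the concrete generator of (\ref{ABSDE without control}) depends on the future paths $\{Y(r)\}_{r\in[t,+\infty)}$ and $\{Z(r)\}_{r\in[t,+\infty)}$ only through the present values $Y(t),Z(t)$ and the two anticipation operators $Y_a(t),Z_a(t)$, so it induces an admissible functional generator of the form required by (\ref{general ABSDE}).

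First I would define the functional generator
$$
F\big(t,\{Y(r)\}_{r\geqslant t},\{Z(r)\}_{r\geqslant t}\big):=f\big(t,Y(t),Y_a(t),Z(t),Z_a(t)\big),
$$
with $Y_a,Z_a$ as in (\ref{anticipation term}), and verify that $F$ is $\mathcal{F}_t$-adapted: this holds because the conditional expectation $E^{\mathcal F_t}[\cdot]$ makes $Y_a(t),Z_a(t)$ $\mathcal F_t$-measurable, while assumption (H2.1) together with $\xi\in\mathcal S^2_{\mathcal F}$ and $\eta\in\mathcal M^{2,\beta}_{\mathcal F}$ ensures the defining integrals converge in $L^2$. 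Next, Lemma \ref{lemma connection} shows precisely that $F$ satisfies the Lipschitz assumption (H2.4) for every $\beta\geqslant 0$. Finally, evaluating $F$ at the zero processes forces $Y_a(t)=Z_a(t)=0$, whence $F(t,\mathbf{0},\mathbf{0})=f(t,0,0,0,0)$, so the integrability requirement of (\ref{general ABSDE}) reduces exactly to (H2.3).

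With these verifications in place, Theorem 4.1 of \cite{ziji} applies directly and yields a unique solution $(Y,Z)\in\mathcal S^2_{\mathcal F}(0,+\infty;\mathbb R^d)\times\mathcal M^{2,\beta}_{\mathcal F}(0,+\infty;\mathbb R^{d\times m})$ together with an a priori bound; specializing the latter through the identity $F(t,\mathbf{0},\mathbf{0})=f(t,0,0,0,0)$ produces the estimate (\ref{basic estimate of IABSDE}). The Lipschitz verification being already dispatched by Lemma \ref{lemma connection}, the main subtlety I anticipate lies in the a priori estimate, specifically in arguing that the constant $C$ may be taken independent of $\beta$. On the left-hand side the norms over the compact interval $[0,T]$ are unweighted, whereas on the right the terminal datum $\eta$ enters through the $e^{\beta t}$-weighted norm on $[T,+\infty)$; one must therefore confirm that the exponential weight is absorbed entirely into the terminal-data term and does not propagate into the multiplicative constant, since otherwise a spurious factor $e^{\beta T}$ would appear. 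Tracking this carefully in the It\^o/Gronwall step underlying Theorem 4.1 — where the contributions of the anticipation operators $Y_a,Z_a$ are controlled by $C_\phi,C_\alpha$ exactly as in Lemma \ref{lemma connection} — is what guarantees that the final constant depends only on $C_\phi,C_\alpha,L$ and $T$.
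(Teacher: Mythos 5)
Your proposal is correct and follows exactly the route the paper takes: embed the IABSDE (\ref{ABSDE without control}) into the general framework (\ref{general ABSDE}) via Lemma \ref{lemma connection} and then invoke Theorem 4.1 of \cite{ziji}. The additional checks you spell out (adaptedness of the induced functional generator, the identity $F(t,\mathbf{0},\mathbf{0})=f(t,0,0,0,0)$, and the $\beta$-independence of the constant $C$) are sound refinements of the same argument, which the paper leaves implicit.
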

		
		We next present some results of Infinitely Delayed SDEs. This class of equations plays a key role in Section \ref{section 3}, where the adjoint process associated with the IABSDE-driven control problem is shown to satisfy such an equation.
		To this end, consider the ISDDE of the form
		\begin{equation} \label{ISDDE without control}
			\left\{\begin{aligned}
				dX(t)=&b\left(t, X(t), \int_{0}^{+\infty}\phi(t-
				\theta,t)X(t-\theta)\alpha(d\theta)\right)d t \\ 
				&+\sigma\left(t, X(t), \int_{0}^{+\infty}\phi(t-
				\theta,t)X(t-\theta)\alpha(d\theta)\right)d W(t),
				\quad &&t \in[0, T]; \\
				X(t) =&\varphi(t), \quad  &&t\in(-\infty,0].
			\end{aligned}\right.
		\end{equation}
		
		The drift and diffusion coefficients are assumed to satisfy the standard Lipschitz and growth conditions.
		Assume for all $t\in[0,T]$, $b: \Omega \times \mathbb{R}^d \times \mathbb{R}^d \rightarrow \mathbb{R}^d$ and $\sigma: \Omega \times \mathbb{R}^d \times \mathbb{R}^d \rightarrow \mathbb{R}^d$ are $\mathcal{F}$-adapted and satisfy the following conditions: 
		
		\textbf{(H2.5)} There exists a constant $L>0$, such that for all $t \in[0, T], x, x^{\prime}, x_d, x_d^{\prime} \in \mathbb{R}^d$, it follows that 
		$$
		\begin{aligned}
			\left|b\left(t, x, x_d\right)-b\left(t, x^{\prime}, x_d^{\prime}\right)\right|
			+\left|\sigma\left(t, x, x_d\right)-\sigma\left(t, x^{\prime}, x_d^{\prime}\right)\right| \leqslant L\left(\left|x-x^{\prime}\right|+\left|x_d-x_d^{\prime}\right|\right).
		\end{aligned}
		$$
		
		\textbf{(H2.6)}  There exists a constant $M > 0$ such that
		$$
		\sup_{0 \leqslant t \leqslant T}\left(|b(t, 0, 0)|^2 \vee|\sigma(t, 0 ,0)|^2 \right)\leqslant M .
		$$
		
		The delay operator in (\ref{ISDDE without control}) shares the same measure-valued structure as that used in the anticipation terms of the IABSDE (\ref{ABSDE without control}). Lemma \ref{lemma connection} implies that the mapping
		$$
		X(\cdot) \longmapsto \int_0^{\infty} \varphi(t-\theta, t) X(t-\theta) \alpha(d \theta)
		$$
		is a bounded linear functional on $\mathcal{M}_\mathcal{F}^2\left(-\infty, T; \mathbb{R}^d\right)$, ensuring that (\ref{ISDDE without control}) fits naturally into the general framework of ISDDE introduced in Wei and Wang \cite{07JMAA}:	
		\begin{equation} \label{general ISDDE}
			\left\{\begin{aligned}
				d X(t)=b&\left(t,\{X(r)\}_{r \in (-\infty, t]}\right) d t+\sigma\left(t,\{X(r)\}_{r \in (-\infty, t]}\right) d W(t), \quad &&t \in[0, T]; \\
				X(t)=\varphi&(t), \quad &&t\in(-\infty, 0].
			\end{aligned}\right.
		\end{equation}
		
		Apply the existence and uniqueness theorem of (\ref{general ISDDE}) in Wei and Wang \cite{07JMAA} to yield the well-posedness result of equation (\ref{ISDDE without control}).
		
		
		\begin{theorem}\label{well-posedness ISDDE}
			Suppose (H2.1),(H2.5) and (H2.6) hold, then for any given initial path $\varphi(\cdot)\in\mathcal{M}_\mathcal{F}^2\left(-\infty, 0; \mathbb{R}^d\right)$, the ISDDE (\ref{ISDDE without control}) has a unique solution $X(\cdot) \in \mathcal{M}_\mathcal{F}^2\left(-\infty, T; \mathbb{R}^d\right) $.
		\end{theorem}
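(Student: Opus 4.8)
The plan is to obtain this result as a direct specialization of the general ISDDE existence-and-uniqueness theorem of Wei and Wang \cite{07JMAA}, in exactly the same spirit as Theorem \ref{well-posedness IABSDE} was deduced from its abstract counterpart through Lemma \ref{lemma connection}. The only substantive work is to check that the path-dependent coefficients in (\ref{ISDDE without control}) belong to the class of functionals covered by the general equation (\ref{general ISDDE}). To this end I would recast the coefficients as functionals of the entire past trajectory by setting
$$\tilde b\big(t, \{X(r)\}_{r\le t}\big) := b\big(t, X(t), X_d(t)\big), \qquad \tilde\sigma\big(t, \{X(r)\}_{r\le t}\big) := \sigma\big(t, X(t), X_d(t)\big),$$
where $X_d(t) := \int_0^{+\infty}\phi(t-\theta,t)X(t-\theta)\alpha(d\theta)$ is the delay operator. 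The whole argument then reduces to controlling $X_d$.

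The key step is to show that the map $X(\cdot)\mapsto X_d(\cdot)$ is a bounded linear functional on $\mathcal{M}_\mathcal{F}^2(-\infty,T;\mathbb{R}^d)$, as already asserted in the paragraph preceding the theorem. I would establish this by the same computation as in Lemma \ref{lemma connection}, mirrored into the backward (delayed) direction and carried out in the unweighted $L^2$ norm appropriate to the finite horizon $[0,T]$. Concretely, (H2.1) together with the Cauchy--Schwarz inequality with respect to the finite measure $|\alpha|$ yields the pointwise bound $|X_d(t)-X_d'(t)|^2 \le C_\phi^2 C_\alpha \int_0^{+\infty}|X(t-\theta)-X'(t-\theta)|^2\,|\alpha|(d\theta)$; integrating in $t$, applying Fubini's theorem, and performing the substitution $s=t-\theta$ then gives
$$E\left[\int_0^T |X_d(t)-X_d'(t)|^2\,dt\right]\le C_\phi^2 C_\alpha^2\, E\left[\int_{-\infty}^{T}|X(s)-X'(s)|^2\,ds\right],$$
which is precisely the required boundedness.

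With this estimate in hand the verification of the abstract hypotheses is routine. Combining the bounded-linearity of $X_d$ with the Lipschitz condition (H2.5) shows that $\tilde b$ and $\tilde\sigma$ satisfy the functional Lipschitz condition demanded by (\ref{general ISDDE}), while (H2.6) controls their values at the zero path and, together with the Lipschitz bound, supplies the linear growth. The existence-and-uniqueness theorem of Wei and Wang \cite{07JMAA} then applies directly and delivers a unique solution $X(\cdot)\in\mathcal{M}_\mathcal{F}^2(-\infty,T;\mathbb{R}^d)$.

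I expect the only genuine obstacle to be bookkeeping rather than conceptual difficulty: ensuring that the Fubini interchange and the change of variables $s=t-\theta$ are legitimate, which uses the $\sigma$-finiteness of $\alpha$ and the total-variation bound in (H2.1), and matching the delay estimate to the precise form of the functional Lipschitz condition employed in \cite{07JMAA} (in particular, whether that condition is phrased via a supremum or an integral over the past segment). Since the delayed argument pushes the trajectory into $(-\infty,0]$, where $X\equiv\varphi$, one must also confirm that the initial path $\varphi\in\mathcal{M}_\mathcal{F}^2(-\infty,0;\mathbb{R}^d)$ contributes only a finite quantity to the right-hand side, which holds by the very definition of that space.
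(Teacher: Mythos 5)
Your proposal is correct and follows exactly the route the paper takes: the paper gives no separate proof of Theorem \ref{well-posedness ISDDE}, but simply notes (via the same Fubini/change-of-variables estimate as in Lemma \ref{lemma connection}) that the delay operator is a bounded linear functional on $\mathcal{M}_\mathcal{F}^2\left(-\infty, T; \mathbb{R}^d\right)$ and then invokes the existence-and-uniqueness theorem for the general ISDDE (\ref{general ISDDE}) from Wei and Wang \cite{07JMAA}. Your write-up is, if anything, more explicit than the paper about verifying the functional Lipschitz and growth hypotheses.
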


		\section{Optimal Control Problem}\label{section 3}
		In this section, we study the stochastic maximum principle for an anticipative stochastic delayed control system. In Section \ref{chapter necessary SMP}, the necessary conditions will be given based on the variational inequality. We also present the sufficient optimality conditions in Section \ref{chapter sufficient SMP} with additional convex conditions.
		\subsection{Formulation of Problem}
		Now, we consider an optimal control problem in which the controlled state dynamics is described by IABSDEs (\ref{ABSDE with control}).
		Given a finite horizon $T > 0$, we assume that for all $t\in[0,T]$, $f: \Omega \times \mathbb{R}^d \times \mathbb{R}^d \times  \mathbb{R}^{m \times d}\times \mathbb{R}^{m \times d} \times \mathbb{R}^k \times \mathbb{R}^k$ $\rightarrow  \mathbb{R}^d$ is a $\mathcal{F}$-adapted function, $\xi(\cdot)\in\mathcal{S}_\mathcal{F}^2\left(T, +\infty; \mathbb{R}^d\right)$, $\eta(\cdot)\in\mathcal{M}_{\mathcal{F}}^{2,\beta}\left(T, +\infty; \mathbb{R}^{d \times m}\right)$ are given terminal conditions. We introduce the control process $v(\cdot)$ taking values in a convex set $U \subset \mathbb{R}^k$, and $\varphi$, the initial path of $ v(\cdot)$, is a given deterministic continuous function from $(-\infty, 0]$ into $U$ such that $\int_{-\infty}^0 \varphi^2(s) \mathrm{ds}<+\infty$. The set of admissible controls is defined as
		$$
		\mathcal{U}_{a d}=\left\{v(\cdot):(-\infty, T] \rightarrow U \mid v(t)=\varphi(t) \text { for } t \leqslant 0, ~~ v(\cdot) \in M_{\mathcal{F}}^2\left(0, T ; \mathbb{R}^k\right) \text { for } t \in[0,T]\right\}.
		$$
		For any admissible control $v(\cdot)\in \mathcal{U}_{ad}$, the corresponding state $\left(Y^v,Z^v\right)$ is determined by the system
		\begin{equation} \label{state equation}
			\left\{\begin{aligned}
				-dY^{v}(t)=&f\left(t, Y^{v}(t), Y^{v}_a(t), Z^{v}(t), Z^{v}_a(t),
				v(t), v_d(t)\right)d t- Z^{v}(t) d W(t), \quad  t \in[0, T]; \\
				Y^{v}(t) =&  \xi(t), Z^{v}(t) = \eta(t), \quad t\in[T, +\infty),\quad \quad v(t)=\varphi(t), \quad t\in(-\infty, 0],
			\end{aligned}\right.
		\end{equation}
		where the anticipation and delay operators are given by
		$$
		\begin{aligned}
			\Gamma^{v}_{a}(t) = &E^{\mathcal{F}_t}\left[\int_{0}^{+\infty}\phi(t,t+\theta)\Gamma^{v}(t+\theta)\alpha(d\theta)\right], \quad \Gamma = Y,Z,\\
			v_d(t) =& \int_{0}^{+\infty}\phi(t-\theta,t)v(t-\theta)\alpha(d\theta).
		\end{aligned}
		$$
		Our goal is to minimize the associated performance functional 
		\begin{equation}\label{cost functional}
			\begin{aligned}
				J(v(\cdot))=E \left[\int_{0}^{T} l\left(t, Y^{v}(t), Y_a^{v}(t), Z^v(t), Z^{v}_a(t), v(t), v_d(t)\right) d t\right]+ \mathbb{E}\left[\gamma(Y^v(0))\right],
			\end{aligned}
		\end{equation}
		where for all $t\in[0,T]$, $l: \Omega \times \mathbb{R}^d   \times \mathbb{R}^d \times  \mathbb{R}^{m \times d}\times \mathbb{R}^{m \times d} \times \mathbb{R}^k \times \mathbb{R}^k$ $\rightarrow  \mathbb{R}^d$ and $\gamma: \mathbb{R}^d$ $\rightarrow \mathbb{R}$ are given measurable functions.
		\\
		\\
		To ensure differentiability of variations, we impose the following conditions:
		
		\textbf{(H3.1)} $f$ is continuously differentiable with respect to $\left(Y,Y_a,Z,Z_a,v,v_d \right)$, and the partial derivative $\left(f_y,f_{y_a},f_z,f_{z_a},f_v,f_{v_d}\right)$ 
		are uniformly bounded. 
		
		\textbf{(H3.2)} For each admissible control $v(\cdot)$, $l\left(\cdot, Y^v(\cdot), Y_a^v(\cdot), Z^v(\cdot), Z_a^v(\cdot), v(\cdot), v_d(\cdot)\right)$ $\in \mathcal{M}_{\mathcal{F}}^{2}(0, T ; \mathbb{R})$, $l$ is differentiable with respect to $\left(Y,Y_a,Z,Z_a,v,v_d \right)$, $\gamma$ is differentiable with respect to $Y$, and all the derivatives are uniformly bounded. 
		
		\begin{remark}
			Suppose (H3.1) holds, then by Theorem \ref{well-posedness IABSDE}, for each admissible control $v(\cdot)\in \mathcal{U}_{ad}$, the state equation (\ref{state equation}) admits a unique solution $\left(Y^v(\cdot), Z^v(\cdot)\right) \in \mathcal{S}_\mathcal{F}^2\left(0, +\infty; \mathbb{R}^d\right) \times \mathcal{M}_\mathcal{F}^{2,\beta}\left(0, +\infty; \mathbb{R}^{d \times m}\right)$ corresponding to control $v$.
		\end{remark}
		
		
		Let $u(\cdot)\in \mathcal{U}_{ad}$ be the optimal control of the delayed
		stochastic control problem (\ref{cost functional}) described by anticipated BSDE (\ref{state equation}), and $Y^u(\cdot),Z^u(\cdot)$ is the corresponding optimal  trajectories. In the remainder of this chapter, we shall derive necessary and sufficient conditions for the maximum principle of the above problem.
		
		\subsection{Necessary Conditions.}\label{chapter necessary SMP}
		
		Let $v(\cdot)$ be arbitrary admissible control in $\mathcal{U}_{ad}$, for each $0\leqslant \varepsilon \leqslant 1$, take the convex perturbation $u^{\varepsilon}(\cdot)=u(\cdot)+\varepsilon\left(v(\cdot)-u(\cdot)\right)\in \mathcal{U}_{ad}$, and denote the corresponding trajectory by $Y^{\varepsilon}(\cdot),Z^{\varepsilon}(\cdot)$.
		
		We let
		$$\hat{v}(t)=v(t)-u(t),\quad \hat{v}_{d}(t)=v_{d}(t)-u_{d}(t),
		$$
		and denote by $\left(\hat{Y}(\cdot), \hat{Z}(\cdot)\right)$ the solution of the variational equation:
		\begin{equation} \label{variational equation}
			\left\{\begin{aligned}
				-d\hat{Y}(t)=&\bigg[f^u_{y}(t)\hat{Y}(t)+f^u_{y_{a}}(t)\hat{Y}_{a}(t)+f^u_{z}(t)\hat{Z}(t)+f^u_{z_{a}}(t)\hat{Z}_{a}(t)
				\\ 
				&~~ +f^u_{v}(t)\hat{v}(t)+f^u_{v_{d}}(t)\hat{v}_d(t)\bigg]dt 
				- \hat{Z}(t) d W(t), \quad\quad  t \in[0, T]; \\
				\hat{Y}(t) = \hat{Z}&(t) = 0, \quad t\in[T, +\infty),\quad\quad  \hat{v}(t)=0, \quad t\in(-\infty, 0].
			\end{aligned}\right.
		\end{equation}
		where for $\Phi=f,l$ and $k=y,y_a,z,z_a,v,v_d, \varnothing$, while $k=\varnothing$ means $\Phi$ does not take partial derivative on any of its component, we use the shorthand
		$$
		\begin{aligned}
			&\Phi^u_k(t)=\Phi_k\left(t, Y^u(t),Y^u_a(t),Z^u(t),Z^u_a(t),u(t),u_d(t)\right),\\
			&\Phi^{\varepsilon}_k(t)=\Phi_k\left(t, Y^{\varepsilon}(t),Y^{\varepsilon}_a(t),Z^{\varepsilon}(t),Z^{\varepsilon}_a(t),u^{\varepsilon}(t),u^{\varepsilon}_d(t)\right).
		\end{aligned}
		$$
		
		%

		Set 
		$$
		\begin{aligned}
			\tilde{Y}^{\varepsilon}(t)=\frac{Y^{\varepsilon}(t)-Y^{u}(t)}{\varepsilon}-\hat{Y}(t),\quad
			\tilde{Z}^{\varepsilon}(t)=\frac{Z^{\varepsilon}(t)-Z^{u}(t)}{\varepsilon}-\hat{Z}(t).
		\end{aligned}
		$$
		\begin{lemma}\label{variational estimate}
			Suppose (H3.1) holds, then we have 
			$$
			\lim_{\varepsilon \rightarrow 0}E\left[\sup_{0 \leqslant t \leqslant T}\left|\tilde{Y}^{\varepsilon}(t)\right|^2\right]=0, \quad \lim_{\varepsilon \rightarrow 0}E\left[\int_{0}^T\left|\tilde{Z}^{\varepsilon}(t)\right|^2\right]=0.
			$$
		\end{lemma}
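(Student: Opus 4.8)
The plan is to derive the backward equation satisfied by the pair $(\tilde{Y}^{\varepsilon}, \tilde{Z}^{\varepsilon})$, to recognize it as an IABSDE of the type covered by Theorem \ref{well-posedness IABSDE} with vanishing terminal data, and then to apply the a priori estimate \eqref{basic estimate of IABSDE} so as to reduce the whole claim to the convergence of a single remainder term.

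First I would expand the driver by the fundamental theorem of calculus. Writing $f^{\varepsilon}-f^{u}$ as the integral over $\lambda\in[0,1]$ of the derivative of $f$ along the segment joining the $u$-arguments to the $u^{\varepsilon}$-arguments, and using $u^{\varepsilon}-u=\varepsilon\hat{v}$, $u_{d}^{\varepsilon}-u_{d}=\varepsilon\hat{v}_{d}$, one obtains
\[
\frac{f^{\varepsilon}(t)-f^{u}(t)}{\varepsilon}
=\int_{0}^{1}\Big[f_{y}^{\lambda}\tfrac{Y^{\varepsilon}-Y^{u}}{\varepsilon}+f_{y_{a}}^{\lambda}\tfrac{Y_{a}^{\varepsilon}-Y_{a}^{u}}{\varepsilon}+f_{z}^{\lambda}\tfrac{Z^{\varepsilon}-Z^{u}}{\varepsilon}+f_{z_{a}}^{\lambda}\tfrac{Z_{a}^{\varepsilon}-Z_{a}^{u}}{\varepsilon}+f_{v}^{\lambda}\hat{v}+f_{v_{d}}^{\lambda}\hat{v}_{d}\Big]\,d\lambda,
\]
where $f_{k}^{\lambda}(t)$ denotes the partial derivative of $f$ evaluated at the interpolated arguments. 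Since the anticipation operator $\Gamma\mapsto\Gamma_{a}$ is linear, $\tfrac{Y_{a}^{\varepsilon}-Y_{a}^{u}}{\varepsilon}=\tilde{Y}_{a}^{\varepsilon}+\hat{Y}_{a}$ and similarly for $Z$, where $\tilde{Y}_{a}^{\varepsilon},\tilde{Z}_{a}^{\varepsilon}$ are the anticipation operators applied to $\tilde{Y}^{\varepsilon},\tilde{Z}^{\varepsilon}$. Substituting $\tfrac{Y^{\varepsilon}-Y^{u}}{\varepsilon}=\tilde{Y}^{\varepsilon}+\hat{Y}$ and subtracting the driver of the variational equation \eqref{variational equation}, I find that $(\tilde{Y}^{\varepsilon},\tilde{Z}^{\varepsilon})$ solves the IABSDE with zero terminal condition on $[T,+\infty)$ and driver
\[
\int_{0}^{1}\big[f_{y}^{\lambda}\tilde{Y}^{\varepsilon}+f_{y_{a}}^{\lambda}\tilde{Y}_{a}^{\varepsilon}+f_{z}^{\lambda}\tilde{Z}^{\varepsilon}+f_{z_{a}}^{\lambda}\tilde{Z}_{a}^{\varepsilon}\big]\,d\lambda+R^{\varepsilon}(t),
\]
where the remainder collects the derivative increments,
\[
R^{\varepsilon}(t)=\int_{0}^{1}\big[(f_{y}^{\lambda}-f_{y}^{u})\hat{Y}+(f_{y_{a}}^{\lambda}-f_{y_{a}}^{u})\hat{Y}_{a}+(f_{z}^{\lambda}-f_{z}^{u})\hat{Z}+(f_{z_{a}}^{\lambda}-f_{z_{a}}^{u})\hat{Z}_{a}+(f_{v}^{\lambda}-f_{v}^{u})\hat{v}+(f_{v_{d}}^{\lambda}-f_{v_{d}}^{u})\hat{v}_{d}\big]\,d\lambda.
\]

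Next, by (H3.1) the $\lambda$-averaged coefficients $\int_{0}^{1}f_{k}^{\lambda}\,d\lambda$ are uniformly bounded, so the homogeneous part of this driver obeys a Lipschitz condition of the type (H2.2); by Lemma \ref{lemma connection} and Theorem \ref{well-posedness IABSDE} the estimate \eqref{basic estimate of IABSDE} applies with $R^{\varepsilon}$ in the role of the free term $f(t,0,0,0,0)$ and with vanishing terminal data, giving
\[
E\Big[\sup_{0\leqslant t\leqslant T}|\tilde{Y}^{\varepsilon}(t)|^{2}+\int_{0}^{T}|\tilde{Z}^{\varepsilon}(t)|^{2}\,dt\Big]\leqslant C\,E\Big[\Big(\int_{0}^{T}|R^{\varepsilon}(t)|\,dt\Big)^{2}\Big].
\]
Thus the entire statement reduces to showing that the right-hand side tends to $0$.

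The main obstacle is this last convergence. I would first record the continuous dependence of the state on the control: applying the same a priori estimate to $Y^{\varepsilon}-Y^{u}$, whose driver differs only through the $O(\varepsilon)$ perturbations $\varepsilon\hat{v},\varepsilon\hat{v}_{d}$, yields $E[\sup_{0\leqslant t\leqslant T}|Y^{\varepsilon}-Y^{u}|^{2}+\int_{0}^{T}|Z^{\varepsilon}-Z^{u}|^{2}\,dt]=O(\varepsilon^{2})\to0$. Consequently every sequence $\varepsilon_{n}\to0$ has a subsequence along which the interpolated arguments converge $P\times dt$-a.e. to the $u$-arguments, and since $f$ is $C^{1}$ with continuous derivatives (H3.1), each increment $f_{k}^{\lambda}-f_{k}^{u}\to0$ a.e. in $(\omega,t,\lambda)$. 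By Cauchy--Schwarz, $(\int_{0}^{T}|R^{\varepsilon}|\,dt)^{2}\leqslant T\int_{0}^{T}|R^{\varepsilon}|^{2}\,dt$, and each summand in $|R^{\varepsilon}|^{2}$ is dominated by $4\|f_{k}\|_{\infty}^{2}$ times one of $|\hat{Y}|^{2},|\hat{Y}_{a}|^{2},|\hat{Z}|^{2},|\hat{Z}_{a}|^{2},|\hat{v}|^{2},|\hat{v}_{d}|^{2}$, all of which are integrable over $\Omega\times[0,T]\times[0,1]$ because $\hat{Y}\in\mathcal{S}_{\mathcal{F}}^{2}$, $\hat{Z}\in\mathcal{M}_{\mathcal{F}}^{2}$, $\hat{v},\hat{v}_{d}\in\mathcal{M}_{\mathcal{F}}^{2}$, and the boundedness of the anticipation operator (Lemma \ref{lemma connection}) controls $\hat{Y}_{a},\hat{Z}_{a}$. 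The dominated convergence theorem then forces $E[(\int_{0}^{T}|R^{\varepsilon}|\,dt)^{2}]\to0$ along the subsequence, and the standard subsequence principle upgrades this to convergence of the full family. The delicate point to handle carefully is precisely this passage to an a.e.-convergent subsequence together with the verification that the dominating functions genuinely lie in $L^{1}$, for which the uniform derivative bounds of (H3.1) and the square-integrability of the variational processes are both indispensable.
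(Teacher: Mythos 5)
Your proposal follows essentially the same route as the paper: linearize $f^{\varepsilon}-f^{u}$ by the mean value theorem (the paper interpolates one variable at a time while you use the integral form along the segment, which is equivalent), recognize the resulting equation for $(\tilde{Y}^{\varepsilon},\tilde{Z}^{\varepsilon})$ as a linear IABSDE with zero terminal data and a residual $R^{\varepsilon}$, and invoke the a priori estimate \eqref{basic estimate of IABSDE}. If anything, your treatment of the last step is more complete than the paper's, which merely asserts the pointwise convergence $R^{\varepsilon}(t)\to 0$, whereas you supply the continuous-dependence estimate for $Y^{\varepsilon}-Y^{u}$, the passage to an a.e.-convergent subsequence, and the $L^{1}$ domination needed to conclude that $E\bigl[\int_{0}^{T}|R^{\varepsilon}(t)|^{2}\,dt\bigr]\to 0$.
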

		\begin{proof} By the definition of $\tilde{Y}^{\varepsilon}(t), \tilde{Z}^{\varepsilon}(t)$, we have
			$$
			\left\{\begin{aligned}
				-d\tilde{Y}^{\varepsilon}(t)=&\bigg[\frac{f^{\varepsilon}(t)-f^u(t)}{\varepsilon}-f^u_{y}(t)\hat{Y}(t)-f^u_{y_{a}}(t)\hat{Y}_{a}(t)-f^u_{z}(t)\hat{Z}(t)-f^u_{z_{a}}(t)\hat{Z}_{a}(t)
				\\ 
				&~~ -f^u_{v}(t)\hat{v}(t)-f^u_{v_{d}}(t)\hat{v}_d(t)\bigg]dt
				- \tilde{Z}^{\varepsilon}(t) d W(t), \quad  t \in[0, T]; \\
				\tilde{Y}^{\varepsilon}(t) = \tilde{Z}&^{\varepsilon}(t) = 0, \quad t\in[T, +\infty),\quad\quad\quad  \hat{v}(t)=0, \quad t\in(-\infty, 0].
			\end{aligned}\right.
			$$
			Applying the Mean Value Theorem to $f^{\varepsilon}(t)-f^u(t)$ in the variables $y,y_a, z, z_a, v, v_d$, we can derive  $\left(\tilde{Y}^{\varepsilon}(t), \tilde{Z}^{\varepsilon}(t)\right)$ satisfy the following IABSDE:
			\begin{equation} \label{linear variational equation}
				\left\{\begin{aligned}
					-d\tilde{Y}^{\varepsilon}(t)=&\left[A^{\varepsilon}(t)\tilde{Y}^{\varepsilon}(t)+B^{\varepsilon}(t)\tilde{Y}^{\varepsilon}_{a}(t)+C^{\varepsilon}(t)\tilde{Z}^{\varepsilon}(t)+D^{\varepsilon}(t)\tilde{Z}^{\varepsilon}_{a}(t)+R^{\varepsilon}(t)\right]dt
					\\ 
					&
					- \tilde{Z}^{\varepsilon}(t) d W(t), \quad\quad\quad\quad\quad\quad\quad\quad\quad\quad~~~  t \in[0, T]; \\
					\tilde{Y}^{\varepsilon}(t) = \tilde{Z}&^{\varepsilon}(t) = 0, \quad t\in[T, +\infty),\quad\quad \hat{v}(t)=0, \quad t\in(-\infty, 0].
				\end{aligned}\right.
			\end{equation}
			where
			$$
			\begin{aligned}
				A^{\varepsilon}(t):=&\int_{0}^{1}f_y\left(t,Y^u(t)+\lambda(Y^{\varepsilon}(t)-Y^u(t)),Y^u_a(t),Z^u(t),Z^u_a(t),u(t),u_d(t)\right)d\lambda,\\	
				B^{\varepsilon}(t):=&\int_{0}^{1}f_{y_{a}}\left(t,Y^u(t),Y_{a}^u(t)+\lambda(Y^{\varepsilon}_{a}(t)-Y_{a}^u(t)),Z^u(t),Z^u_a(t),u(t),u_d(t)\right)d\lambda,
			\end{aligned}
			$$
			and $C^{\varepsilon}(t),D^{\varepsilon}(t), E^{\varepsilon}(t), F^{\varepsilon}(t)$ are denoted similarly.
			The residual term $R^{\varepsilon}(t)$ is given by 
			$$
			\begin{aligned}
				R^{\varepsilon}(t):=&\left(A^{\varepsilon}(t)-f^u_y(t)\right)\hat{Y}(t) +  \left(B^{\varepsilon}(t)-f^u_{y_{a}}(t)\right)\hat{Y}_{a}(t) + \left(C^{\varepsilon}(t)-f^u_z(t)\right)\hat{Z}(t)\\
				& ~+ \left(D^{\varepsilon}(t)-f^u_{z_{a}}(t)\right)\hat{Z}_{a}(t)+ \left(E^{\varepsilon}(t)-f^u_{v}(t)\right)\hat{v}(t) +
				\left(F^{\varepsilon}(t)-f^u_{v_{d}}(t)\right)\hat{v}_{d}(t).
			\end{aligned}
			$$
			It follows that for all $t\in[0,T]$, $R^{\varepsilon}(t)\rightarrow0$ as $\varepsilon \rightarrow 0$. Then applying the priori estimate (\ref{basic estimate of IABSDE}) to the linear equation (\ref{linear variational equation}), we can obtain 
			$$	
			\begin{aligned}
				E\left[\sup _{0 \leqslant t \leqslant T}\left|\tilde{Y}^{\varepsilon}(t)\right|^2+\int_0^{T}\left|\tilde{Z}^{\varepsilon}(t)\right|^2 d t\right]
				\leqslant C\left(T, L, C_{\phi}, C_{\alpha}\right) E\left[\int_0^T|R^{\varepsilon}(t)|^2 d t\right] \rightarrow 0,
			\end{aligned}
			$$
			as $\varepsilon \rightarrow 0$.
		\end{proof}
		\\
		
		Since the anticipation operators $Y\mapsto Y_{a}, Z\mapsto Z_{a}$ are bounded linear functionals (see Lemma \ref{lemma connection}), we immediately obtain:
		\begin{remark}\label{variational estimate anticipated}
			Under the assumptions of Lemma \ref{variational estimate},
			$$
			\lim_{\varepsilon \rightarrow 0}E\left[\sup_{0 \leqslant t \leqslant T}\left|\tilde{Y}_{a}^{\varepsilon}(t)\right|^2+\int_{0}^T\left|\tilde{Z}_{a}^{\varepsilon}(t)\right|^2\right]=0.
			$$
		\end{remark} 
		
		
		
		From Lemma \ref{variational estimate} and Remark \ref{variational estimate anticipated}, we can derive the variational inequality.
		\begin{lemma}\label{variational inequality}
			Suppose (H3.1) and (H3.2) hold, then the following variational inequality holds:
			\begin{equation}\label{eq variational inequality}
				\begin{aligned}
					&E\bigg[ \int_{0}^{T}\bigg(l^u_y(t)\hat{Y}(t)+l^u_{y_{a}}(t)\hat{Y}_{a}(t) +l^u_z(t)\hat{Z}(t)+ l^u_{z_{a}}(t)\hat{Z}_{a}(t)\\
					&\quad\quad\quad\quad  +l^u_v(t)\hat{v}(t) +l^u_{v_{d}}(t)\hat{v}_{d}(t)  \bigg)dt + \gamma_y\left(Y^u(0)\right)\hat{Y}(0) \bigg]\geqslant 0.
				\end{aligned}
			\end{equation}
		\end{lemma}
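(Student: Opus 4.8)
The plan is to exploit the optimality of $u(\cdot)$ together with the first-order expansion already prepared in Lemma \ref{variational estimate}. Since $u(\cdot)$ minimizes $J$ and the convex perturbation $u^{\varepsilon}(\cdot)=u(\cdot)+\varepsilon(v(\cdot)-u(\cdot))\in\mathcal{U}_{ad}$ for every $\varepsilon\in[0,1]$, we have $J(u^{\varepsilon}(\cdot))-J(u(\cdot))\geqslant 0$, and dividing by $\varepsilon>0$ gives
$$\frac{1}{\varepsilon}\left(J(u^{\varepsilon}(\cdot))-J(u(\cdot))\right)\geqslant 0.$$
The goal is to show that the left-hand side converges, as $\varepsilon\to 0^{+}$, to the expression in (\ref{eq variational inequality}); the inequality then passes to the limit.

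First I would split the difference quotient into the running-cost part and the terminal part according to (\ref{cost functional}). For the running cost I would write $l^{\varepsilon}(t)-l^{u}(t)$ via the fundamental theorem of calculus as an integral over $\lambda\in[0,1]$ of the derivatives of $l$ evaluated along the segment joining $(Y^{u},Y_{a}^{u},Z^{u},Z_{a}^{u},u,u_{d})$ to $(Y^{\varepsilon},Y_{a}^{\varepsilon},Z^{\varepsilon},Z_{a}^{\varepsilon},u^{\varepsilon},u_{d}^{\varepsilon})$, contracted against the increments. Dividing by $\varepsilon$ and using the identities $\frac{Y^{\varepsilon}-Y^{u}}{\varepsilon}=\hat{Y}+\tilde{Y}^{\varepsilon}$, $\frac{Z^{\varepsilon}-Z^{u}}{\varepsilon}=\hat{Z}+\tilde{Z}^{\varepsilon}$ (and likewise for the anticipated components), together with $\frac{u^{\varepsilon}-u}{\varepsilon}=\hat{v}$ and $\frac{u_{d}^{\varepsilon}-u_{d}}{\varepsilon}=\hat{v}_{d}$, reduces the running-cost quotient to a sum of products of intermediate-point derivatives of $l$ against the variational processes. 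For the terminal term the same mean-value argument gives $\frac{1}{\varepsilon}\left(\gamma(Y^{\varepsilon}(0))-\gamma(Y^{u}(0))\right)=\big(\int_{0}^{1}\gamma_{y}(\cdots)\,d\lambda\big)\big(\hat{Y}(0)+\tilde{Y}^{\varepsilon}(0)\big)$.

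The crux is then passing to the limit $\varepsilon\to 0^{+}$ inside the expectation and the time integral. By (H3.2) all partial derivatives of $l$ and $\gamma$ are uniformly bounded, so the intermediate-point derivatives are dominated, and by differentiability together with the $L^{2}$-convergence $Y^{\varepsilon}\to Y^{u}$, etc., they converge to $l^{u}_{y}(t),\dots,\gamma_{y}(Y^{u}(0))$. Meanwhile Lemma \ref{variational estimate} gives $\tilde{Y}^{\varepsilon}\to 0$ in $\mathcal{S}_{\mathcal{F}}^{2}$ and $\tilde{Z}^{\varepsilon}\to 0$ in $\mathcal{M}_{\mathcal{F}}^{2}$, and Remark \ref{variational estimate anticipated} gives the analogous statement for $\tilde{Y}_{a}^{\varepsilon},\tilde{Z}_{a}^{\varepsilon}$; hence each of $\frac{Y^{\varepsilon}-Y^{u}}{\varepsilon},\frac{Z^{\varepsilon}-Z^{u}}{\varepsilon},\frac{Y_{a}^{\varepsilon}-Y_{a}^{u}}{\varepsilon},\frac{Z_{a}^{\varepsilon}-Z_{a}^{u}}{\varepsilon}$ converges in $L^{2}$ to $\hat{Y},\hat{Z},\hat{Y}_{a},\hat{Z}_{a}$ respectively. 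Combining the uniform boundedness of the derivatives with the $L^{2}$-convergence of the increments, the Cauchy--Schwarz inequality controls each cross term and justifies the interchange of limit with $E\left[\int_{0}^{T}\cdot\,dt\right]$ (equivalently, a dominated-convergence argument on $\Omega\times[0,T]$). The main obstacle is precisely this uniform control: the difference quotients must remain bounded in $L^{2}$ uniformly in $\varepsilon$, which holds because each equals its fixed variational process plus a remainder $\tilde{(\cdot)}^{\varepsilon}$ that vanishes in $L^{2}$. Taking the limit then yields exactly (\ref{eq variational inequality}), completing the proof.
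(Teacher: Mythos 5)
Your proposal is correct and follows essentially the same route as the paper: both exploit the convex perturbation and optimality to get a nonnegative difference quotient, expand $\gamma$ and $l$ to first order, and invoke Lemma \ref{variational estimate} and Remark \ref{variational estimate anticipated} to kill the remainders in the limit $\varepsilon\to 0^{+}$. Your write-up is somewhat more explicit about the mean-value representation and the dominated-convergence/Cauchy--Schwarz justification for interchanging the limit with the expectation, where the paper simply absorbs these steps into $o(\varepsilon)$ and $o(1)$ terms, but the argument is the same.
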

		\begin{proof} ~Since $u(\cdot)$ is optimal, we have
			$$
			\begin{aligned}
				0\leqslant \frac{J\left(u^{\varepsilon}(\cdot)\right)-J\left(u(\cdot)\right)}{\varepsilon}&=\frac{1}{\varepsilon}E\left[\gamma\left(Y^{\varepsilon}(0)\right)-\gamma\left(Y^{u}(0)\right)+\int_{0}^{T}\left(l^{\varepsilon}(t)-l^{u}(t)\right)dt\right].
			\end{aligned}
			$$
			By Lemma \ref{variational estimate} and Taylor's expansion, we have 
			$$
			\begin{aligned}
				\frac{1}{\varepsilon}E\left[\gamma\left(Y^{\varepsilon}(0)\right)-\gamma\left(Y^{u}(0)\right)\right]&=\frac{1}{\varepsilon}E\left[\gamma\left(Y^{u}(0)+\varepsilon \hat{Y}(0)\right)-\gamma\left(Y^{u}(0)\right)+o(\varepsilon)\right]\\
				&=E\left[\gamma_y\left(Y^u(0)\right)\hat{Y}(0)\right]+o(1).
			\end{aligned}
			$$
			Similarly, using the same technique to deal with the $\left(y,y_a,z,z_a,v,v_d \right)$ component of $l$, together with the convergence in Lemma \ref{variational estimate} and Remark \ref{variational estimate anticipated},  we get 
			$$
			\begin{aligned}
				\frac{1}{\varepsilon}E\left[\int_{0}^{T}\left(l^{\varepsilon}(t)-l^{u}(t)\right)dt\right]
				=&\frac{1}{\varepsilon}E\bigg[\int_{0}^{T}\bigg(l\left(t, Y^{u}(t)+\varepsilon \hat{Y}(t),  Y^{u}_a(t)+\varepsilon \hat{Y}_a(t),Z^{u}(t)+\varepsilon \hat{Z}(t), \right.\\
				&\quad\quad\quad\quad\quad~ \left. Z^{u}_{a}(t)+\varepsilon \hat{Z}_{a}(t), u^{\varepsilon}(t),u_d^{\varepsilon}(t) \right)-l^{u}(t)\bigg) dt +o(\varepsilon)\bigg]\\
				=&E\bigg[ \int_{0}^{T}\bigg(l^u_y(t)\hat{Y}(t)+l^u_{y_{a}}(t)\hat{Y}_{a}(t) +l^u_z(t)\hat{Z}(t)\\
				&\quad\quad\quad~~  + l^u_{z_{a}}(t)\hat{Z}_{a}(t)+l^u_v(t)\hat{v}(t)+l^u_{v_{d}}(t)\hat{v}_{d}(t)  \bigg)dt + o(1)\bigg].
			\end{aligned}
			$$
			Sum the above two part up, then (\ref{eq variational inequality}) holds. 
		\end{proof}
		\\
		
		In order to derive the maximum principle, we introduce the dual process of the
		variational equation (\ref{variational equation}) as the following ISDDE:
		\begin{equation}\label{adjoint equation}
			\left\{\begin{aligned}
				d p(t)= & \left\{ f_y^u(t) p(t)+\int_0^{+\infty}f_{y_a}^u(t-\theta) \phi(t-\theta, t)p(t-\theta) \alpha(d\theta)\right. \\
				& \left.-l_y^u(t) -\int_{0}^{+\infty}l_{y_a}^u(t-\theta)\phi(t-\theta,t)\alpha(d\theta) \right\} d t \\
				& + \left\{ f_z^u(t) p(t)+\int_0^{+\infty}f_{z_a}^u(t-\theta) \phi(t-\theta, t)p(t-\theta) \alpha(d\theta)\right. \\
				& \left.-l_z^u(t) -\int_{0}^{+\infty}l_{z_a}^u(t-\theta)\phi(t-\theta,t)\alpha(d\theta) \right\} dW(t), \quad t\in[0,T]; \\
				p(0)= & -\gamma_y\left(Y^u(0)\right), \quad\quad p(t)=0, \quad t\in(-\infty,0).
			\end{aligned}\right.
		\end{equation}
		
		\begin{remark}
			We can see that for any given admissible control $v(\cdot)$, the adjoint equation (\ref{adjoint equation}) forms a linear ISDDE. 
			Under the assumptions (H2.1), (H2.5) and (H2.6), Theorem \ref{well-posedness ISDDE} ensures that (\ref{adjoint equation}) admits a unique solution in $\mathcal{M}_\mathcal{F}^2\left(-\infty, T; \mathbb{R}^d\right)$. 
		\end{remark}

		Define the Hamiltonian function $H: [0,	T] \times \mathbb{R}^d \times \mathbb{R}^d \times \mathbb{R}^{m \times d} \times \mathbb{R}^{m \times d}\times \mathbb{R}^k \times \mathbb{R}^k \times \mathbb{R}^d$ $\rightarrow  \mathbb{R}$: 
		$$
		H\left(t, y, y_a, z, z_a, v, v_d, p\right):=l\left(t, y, y_a, z, z_a, v, v_d\right)-\left\langle p, f\left(t, y, y_a, z, z_a, v, v_d\right)\right\rangle .
		$$
		
		Then the adjoint equation (\ref{adjoint equation}) can be rewritten into  stochastic Hamiltonian system type as:
		\begin{equation}
			\left\{\begin{aligned}
				d p(t)= & \left\{-H^u_y(t) -\int_{0}^{+\infty}\phi(t-\theta,t)H^u_{y_a}(t-\theta)\alpha(d\theta)\right\} d t \\
				& \left\{-H^u_z(t) -\int_{0}^{+\infty}\phi(t-\theta,t)H^u_{z_a}(t-\theta)\alpha(d\theta) \right\} dW(t), \quad t\in[0,T]; \\
				p(0)= & -\gamma_y\left(Y^u(0)\right), \quad\quad p(t)=0, \quad t\in(-\infty,0).
			\end{aligned}\right.
		\end{equation}
		where we denote $H^u_k(t)=H_k\left(t, Y^u(t),Y^u_a(t),Z^u(t),Z^u_a(t),u(t),u_d(t),p(t)\right)$ for $k=y,y_a,z,z_a, v,v_d$.
		\\
		
		Now we give the main result of this paper.
		\begin{theorem}\label{necessary SMP}
			(Necessary conditions of optimality) Assume (H3.1) and (H3.2) hold. Let $u(\cdot)$ be an optimal control of the stochastic optimal control problem with delay (\ref{cost functional}) described by (\ref{state equation}), Suppose $Y^u(\cdot),Z^u(\cdot)$ is the corresponding optimal trajectories and $p(\cdot)$ solves the adjoint equation (\ref{adjoint equation}). Then
			\begin{equation}\label{condition neccesary SMP}
				\begin{aligned}
					\left\langle H^u_v(t) + E^{\mathcal{F}_t}\left[ \int_{0}^{+\infty}\phi(t,t+\theta)H^u_{v_{d}}(t+\theta)\alpha(d\theta)\right],v-u(t)\right\rangle  \geqslant 0,
				\end{aligned}
			\end{equation}
			a.e., a.s. for all $v\in U$.
		\end{theorem}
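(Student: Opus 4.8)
The plan is to recover the pointwise inequality (\ref{condition neccesary SMP}) from the global variational inequality (\ref{eq variational inequality}) by pairing the variational process $\hat{Y}$ against the adjoint process $p$ through the Itô product rule, thereby realizing the advertised duality between the anticipation operator in (\ref{variational equation}) and the delay operator in (\ref{adjoint equation}). First I would apply Itô's formula to $\langle p(t),\hat{Y}(t)\rangle$ on $[0,T]$, inserting the dynamics of $\hat{Y}$ from (\ref{variational equation}) and of $p$ from (\ref{adjoint equation}), then integrate and take expectations. The stochastic integrals vanish in expectation thanks to the a priori estimate (\ref{basic estimate of IABSDE}) and the uniform boundedness of the derivatives in (H3.1)--(H3.2). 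Using the boundary data $\hat{Y}(t)=0$ for $t\geqslant T$ and $p(0)=-\gamma_y(Y^u(0))$, the only surviving boundary contribution is $E[\gamma_y(Y^u(0))\hat{Y}(0)]$, which is exactly the terminal term in (\ref{eq variational inequality}).

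The core of the argument is the simplification of the resulting drift. The $f_y^u$ and $f_z^u$ contributions from $\langle\,\cdot\,,\hat{Y}\rangle$ and from the Itô cross-variation $\langle\,\cdot\,,\hat{Z}\rangle$ cancel pointwise against the $-\langle p,\cdot\rangle$ terms. For the memory terms I would invoke Fubini's theorem together with the substitution $s=t+\theta$: the delay integral $\int_0^{+\infty}f_{y_a}^u(t-\theta)\phi(t-\theta,t)p(t-\theta)\alpha(d\theta)$ paired with $\hat{Y}(t)$, after relabelling and the support truncations $\hat{Y}|_{[T,+\infty)}=0$ and $p|_{(-\infty,0)}=0$, transforms into $\langle p(t),f_{y_a}^u(t)\hat{Y}_a(t)\rangle$ upon re-inserting the conditional expectation $E^{\mathcal{F}_t}$ via the tower property (valid since $p(t)$ and the adjoint coefficients are $\mathcal{F}_t$-measurable). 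Thus the $f_{y_a}^u$ and $f_{z_a}^u$ terms cancel as well, and the identical computation converts the delay-type cost integrals $\int_0^{+\infty}l_{y_a}^u(t-\theta)\phi(t-\theta,t)\alpha(d\theta)$ into the anticipation-type terms $l_{y_a}^u(t)\hat{Y}_a(t)$ and $l_{z_a}^u(t)\hat{Z}_a(t)$. After these cancellations the identity reads
$$E\left[\int_0^T\left(l_y^u\hat{Y}+l_{y_a}^u\hat{Y}_a+l_z^u\hat{Z}+l_{z_a}^u\hat{Z}_a\right)dt+\gamma_y(Y^u(0))\hat{Y}(0)\right]=-E\left[\int_0^T\left(\langle p,f_v^u\hat{v}\rangle+\langle p,f_{v_d}^u\hat{v}_d\rangle\right)dt\right].$$

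Substituting this into (\ref{eq variational inequality}) eliminates every state-dependent variation $\hat{Y},\hat{Y}_a,\hat{Z},\hat{Z}_a$ and, using $H=l-\langle p,f\rangle$, leaves $E[\int_0^T(\langle H_v^u,\hat{v}\rangle+\langle H_{v_d}^u,\hat{v}_d\rangle)dt]\geqslant0$. Writing the delayed control as $\hat{v}_d(t)=\int_0^{+\infty}\phi(t-\theta,t)\hat{v}(t-\theta)\alpha(d\theta)$ and applying the same Fubini/substitution step, now with $s=t-\theta$ and the support condition $\hat{v}|_{(-\infty,0]}=0$, the second integral becomes $\langle E^{\mathcal{F}_t}[\int_0^{+\infty}\phi(t,t+\theta)H_{v_d}^u(t+\theta)\alpha(d\theta)],\hat{v}(t)\rangle$, the conditional expectation again arising from the $\mathcal{F}_t$-measurability of $\hat{v}(t)=v(t)-u(t)$. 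Hence
$$E\left[\int_0^T\left\langle H_v^u(t)+E^{\mathcal{F}_t}\left[\int_0^{+\infty}\phi(t,t+\theta)H_{v_d}^u(t+\theta)\alpha(d\theta)\right],v(t)-u(t)\right\rangle dt\right]\geqslant0$$
for every admissible $v(\cdot)$. I would finally pass to the pointwise statement (\ref{condition neccesary SMP}) by the standard localization argument: for fixed $w\in U$ and a progressively measurable set, choose the admissible perturbation equal to $w$ on that set and to $u$ elsewhere, and let the set shrink.

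The step I expect to be the main obstacle is this memory-term duality: justifying the Fubini interchange and the changes of variables $s=t\pm\theta$ uniformly in $\theta$ against the $\sigma$-finite measure $\alpha$, bookkeeping the boundary truncations forced by the support conditions on $\hat{Y}$ and $\hat{v}$ (including the convention that renders the upper limit $+\infty$ in the final integral), and correctly re-inserting the conditional expectations through the tower property. Integrability at every stage is guaranteed by (H2.1), the estimate (\ref{basic estimate of IABSDE}), and the uniform bounds in (H3.1)--(H3.2).
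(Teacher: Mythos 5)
Your proposal follows essentially the same route as the paper's proof: Itô's formula applied to $\left\langle p(t),\hat{Y}(t)\right\rangle$, cancellation of the memory terms via Fubini's theorem and the change of variables $s=t\pm\theta$ together with the support conditions on $\hat{Y}$, $\hat{Z}$, $p$ and $\hat{v}$, substitution into the variational inequality of Lemma \ref{variational inequality} to isolate the control variations, a second Fubini step converting $\hat{v}_d$ into the anticipated Hamiltonian derivative, and finally the standard localization argument (the paper cites Cadenillas and Karatzas for this last step) to pass from the integral to the pointwise inequality. The intermediate identity you display is exactly the paper's $E[\gamma_y(Y^u(0))\hat{Y}(0)]=\Delta_2+\Delta_3$ after $\Delta_1=0$, so the argument is correct and matches the paper's.
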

		
		\begin{proof}
			Applying Itô's formula to $\left\langle p(t), \hat{Y}(t)\right\rangle$ on $[0,T]$ , we have
			$$
			\begin{aligned}
				E\left[\gamma_y\left(Y^u(0)\right)\hat{Y}(0) \right] = \Delta_{1} +\Delta_{2} +\Delta_{3},
			\end{aligned}
			$$
			where
	\begin{align*}
	\Delta_{1}:=\;&
	E\!\left[\int_{0}^{T}\left\langle p(t),
	-f^u_{y_{a}}(t)\hat{Y}_{a}(t)-f^u_{z_{a}}(t)\hat{Z}_{a}(t)
	\right\rangle dt \right] \nonumber\\
	&+E\!\left[\int_{0}^{T}\left\langle
	\int_{0}^{+\infty} f^u_{y_a}(t-\theta)\,\phi(t-\theta,t)\,p(t-\theta)\,\alpha(d\theta),
	\hat{Y}(t)\right\rangle dt \right] \nonumber\\
	&+E\!\left[\int_{0}^{T}\left\langle
	\int_{0}^{+\infty} f^u_{z_a}(t-\theta)\,\phi(t-\theta,t)\,p(t-\theta)\,\alpha(d\theta),
	\hat{Z}(t)\right\rangle dt \right], \displaybreak[2]\\[2pt]
	\Delta_{2}:=\;&
	E\!\left[\int_{0}^{T}\left\langle
	-l^u_y(t)-\int_{0}^{+\infty}\phi(t-\theta,t)\,l^u_{y_a}(t-\theta)\,\alpha(d\theta),
	\hat{Y}(t)\right\rangle dt \right] \nonumber\\
	&+E\!\left[\int_{0}^{T}\left\langle
	-l^u_z(t)-\int_{0}^{+\infty}\phi(t-\theta,t)\,l^u_{z_a}(t-\theta)\,\alpha(d\theta),
	\hat{Z}(t)\right\rangle dt \right], \displaybreak[2]\\[2pt]
	\Delta_{3}:=\;&
	E\!\left[\int_{0}^{T}\left\langle p(t),
	-f^u_{v}(t)\hat{v}(t)-f^u_{v_{d}}(t)\hat{v}_d(t)
	\right\rangle dt\right].
\end{align*}
			Considering the initial condition of $p(t)$ and terminal conditions of $\hat{Y}(t)$, we apply Fubini's Theorem to obtain 
			$$
			\begin{aligned}
				E\left[\int_{0}^{T}\left\langle p(t), -f^u_{y_{a}}(t)\hat{Y}_{a}(t) \right\rangle dt \right]&= E\left[\int_{0}^{T}\left\langle -f^u_{y_{a}}(t)p(t), E^{\mathcal{F}_t}\left[\int_{0}^{+\infty}\phi(t,t+\theta)\hat{Y}(t+\theta)\alpha(d\theta)\right] \right\rangle dt \right]  \\
				&= E\left[\int_{0}^{T}\left\langle -f^u_{y_{a}}(t)p(t), \int_{0}^{+\infty}\phi(t,t+\theta)\hat{Y}(t+\theta)\alpha(d\theta) \right\rangle dt \right]  \\
				&= E\left[\int_{\theta}^{T+\theta}\int_{0}^{+\infty}\left\langle -f^u_{y_{a}}(t-\theta)\phi(t-\theta,t)p(t-\theta), \hat{Y}(t) \right\rangle \alpha(d\theta) dt \right]  \\
				&= E\left[\int_{0}^{T}\left\langle \int_{0}^{+\infty}-f^u_{y_{a}}(t-\theta)\phi(t-\theta,t)p(t-\theta)\alpha(d\theta), \hat{Y}(t) \right\rangle dt  \right].
			\end{aligned}
			$$	
			Since $\hat{Y}(t),\hat{Z}(t)=0, t\geqslant T$ and $l_{y_{a}}^u(t),l_{z_{a}}^u(t)=0, t\leqslant 0 $, the following duality relations also hold:
			$$
			\begin{aligned}
				E\left[\int_{0}^{T}\left\langle p(t), -f^u_{z_{a}}(t)\hat{Z}_{a}(t) \right\rangle dt \right]&= E\left[\int_{0}^{T}\left\langle \int_{0}^{+\infty}-f^u_{z_{a}}(t-\theta)\phi(t-\theta,t)p(t-\theta)\alpha(d\theta), \hat{Z}(t) \right\rangle dt  \right], \\
				E\left[\int_{0}^{T} l^u_{y_{a}}(t)\hat{Y}_{a}(t) dt \right]&= E\left[\int_{0}^{T}\left\langle \int_{0}^{+\infty} l^u_{y_{a}}(t-\theta)\phi(t-\theta,t)\alpha(d\theta), \hat{Y}(t) \right\rangle dt \right],\\
				E\left[\int_{0}^{T} l^u_{z_{a}}(t)\hat{Z}_{a}(t) dt \right]&= E\left[\int_{0}^{T}\left\langle \int_{0}^{+\infty} l^u_{z_{a}}(t-\theta)\phi(t-\theta,t)\alpha(d\theta), \hat{Z}(t) \right\rangle dt \right].\\
			\end{aligned}
			$$
			Thus, $\Delta_{1}=0$. Moreover, by the variational inequality (\ref{eq variational inequality}), we can derive that
			\begin{equation}\label{fist neccesarry condition}
				E\left[ \int_0^T\left\{-\left\langle f_v^u(t) p(t), \hat{v}(t)\right\rangle-\left\langle f_{v_d}^u(t) p(t), \hat{v}_d(t)\right\rangle +\left\langle l_v^u(t), \hat{v}(t)\right\rangle + \left\langle l_{v_d}^u(t), \hat{v}_d(t)\right\rangle \right\} d t \right]\geqslant 0.
			\end{equation}
			Note that $\hat{v}(t)=0, t\leqslant 0$ and $H^u_{v_{d}}(t)=0, t\geqslant T$, then by Fubini's Theorem, we have 
			$$
			\begin{aligned}
				E\left[\int_{0}^{T} \left\langle l_{v_{d}}^u(t)-f_{v_d}^u(t) p(t), \hat{v}_{d}(t) \right\rangle dt \right] &=E\left[\int_{0}^{T} \left\langle H^u_{v_{d}}(t), \int_{0}^{+\infty}\phi(t-\theta,t)\hat{v}(t-\theta)\alpha(d\theta)\right\rangle dt \right]\\
				&=E\left[\int_{0}^{T} \left\langle E^{\mathcal{F}_t}\left[\int_{0}^{+\infty}\phi(t,t+\theta)H^u_{v_{d}}(t+\theta)\alpha(d\theta)\right], \hat{v}(t)\right\rangle dt \right].
			\end{aligned}
			$$
			Thus, (\ref{fist neccesarry condition}) becomes 
			\begin{equation} \label{integral neccesarry condition}
				\begin{aligned}
					E\left[\int_{0}^{T}\left\langle H^u_v(t) + E^{\mathcal{F}_t}\left[ \int_{0}^{+\infty}\phi(t,t+\theta)H^u_{v_{d}}(t+\theta)\alpha(d\theta)\right],\hat{v}(t)\right\rangle dt\right] \geqslant 0.
				\end{aligned}
			\end{equation}
			Finally, we transform (\ref{integral neccesarry condition}), which is the stochastic maximum principle in integral form, into (\ref{condition neccesary SMP}). The detailed proof is similar to the proof of Theorem 1.5 in Cadenillas and Karatzas \cite{theorem15}, we omit it. Thus we complete the proof.
		\end{proof}
		
		\begin{remark}
			Condition (\ref{condition neccesary SMP}) is equivalent to the following statement:
			\begin{equation}\label{SMP without state constrain}
				\begin{aligned}
					&\left\langle H^u_v(t) + E^{\mathcal{F}_t}\left[ \int_{0}^{+\infty}\phi(t,t+\theta)H^u_{v_{d}}(t+\theta)\alpha(d\theta)\right],u(t)\right\rangle  \\
					&	= \min_{v\in U} \left\langle H^u_v(t) + E^{\mathcal{F}_t}\left[ \int_{0}^{+\infty}\phi(t,t+\theta)H^u_{v_{d}}(t+\theta)\alpha(d\theta)\right],v\right\rangle , \quad \text { a.e., a.s. }
				\end{aligned}
			\end{equation}
			This is the usual form of stochastic maximum principle without state constrains.
		\end{remark}
		
		\subsection{Sufficient Conditions.}\label{chapter sufficient SMP}
		In this section, we impose additional assumptions to obtain the sufficient conditions for the control problem (\ref{cost functional}). Let us introduce some convexity conditions:
		
		\textbf{(H3.3)} For all $t\in[0,T]$ and any given $p(t)$, $H\left(y,y_a,z,z_a, v,v_d,p(t)\right)$ is a concave function of $y,y_a,z, z_a, v,v_d$ and $\gamma$ is concave in $y$.
		
		\begin{theorem}\label{sufficient SMP}
			(Sufficient conditions of optimality) Suppose $u(\cdot)\in\mathcal{U}_{ad}$ and let $Y^u(\cdot),Z^u(\cdot)$ be the corresponding trajectories and $p(t)$ be the solution of adjoint equation (\ref{adjoint equation}). If (H3.1)-(H3.3) and (\ref{condition neccesary SMP}) (or (\ref{SMP without state constrain})) hold for $u(\cdot)$, then $u(\cdot)$ is an optimal control of the optimal stochastic control problem with delay (\ref{cost functional}) described by anticipated BSDE (\ref{ABSDE with control}).
		\end{theorem}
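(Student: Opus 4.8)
The plan is to follow the classical convex–duality route for sufficiency, reusing verbatim the Itô and Fubini manipulations already carried out in the proof of Theorem \ref{necessary SMP}. Fix an arbitrary $v(\cdot)\in\mathcal{U}_{ad}$ with associated trajectory $(Y^v,Z^v)$, and set $\tilde{Y}=Y^v-Y^u$, $\tilde{Z}=Z^v-Z^u$ with anticipation operators $\tilde{Y}_a,\tilde{Z}_a$, together with $\hat{v}=v-u$ and $\hat{v}_d=v_d-u_d$. Using the Hamiltonian identity $l=H+\langle p,f\rangle$ (with $H^v,H^u$ evaluated along $(Y^v,\dots,v,v_d)$ and $(Y^u,\dots,u,u_d)$ respectively, both at the common adjoint $p$), I would first split the cost increment as
$$
J(v)-J(u)=E\!\left[\int_0^T\!\big(H^v(t)-H^u(t)\big)dt\right]+E\!\left[\int_0^T\!\langle p(t),f^v(t)-f^u(t)\rangle dt\right]+E\big[\gamma(Y^v(0))-\gamma(Y^u(0))\big],
$$
and the goal is to show this quantity is nonnegative.

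The heart of the argument is to evaluate the middle term by applying Itô's formula to $\langle p(t),\tilde{Y}(t)\rangle$ on $[0,T]$. Since $p$ solves the forward ISDDE (\ref{adjoint equation}) while $\tilde{Y}$ solves the backward equation $d\tilde{Y}=-(f^v-f^u)\,dt+\tilde{Z}\,dW$ with $\tilde{Y}(T)=0$, and since $p(0)=-\gamma_y(Y^u(0))$, the boundary contributions collapse to $E[\gamma_y(Y^u(0))\tilde{Y}(0)]$. The memory integrals appearing in the drift and diffusion of $p$, namely the terms $\int_0^{+\infty}f^u_{y_a}(t-\theta)\phi(t-\theta,t)p(t-\theta)\alpha(d\theta)$ and their $z_a,l_{y_a},l_{z_a}$ analogues, are then converted by exactly the Fubini time-shift identities established in Theorem \ref{necessary SMP} (using $\tilde{Y}(t)=\tilde{Z}(t)=0$ for $t\ge T$ and $l^u_{y_a}(t)=l^u_{z_a}(t)=0$ for $t\le 0$) into the anticipation pairings with $\tilde{Y}_a,\tilde{Z}_a$. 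This produces an identity expressing $E[\int_0^T\langle p,f^v-f^u\rangle dt]$ through $E[\gamma_y(Y^u(0))\tilde{Y}(0)]$ and the first-order state/anticipation terms $\langle H^u_y,\tilde{Y}\rangle,\langle H^u_{y_a},\tilde{Y}_a\rangle,\langle H^u_z,\tilde{Z}\rangle,\langle H^u_{z_a},\tilde{Z}_a\rangle$.

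I would then invoke hypothesis (H3.3): the concavity of $\gamma$ and of $H$ in $(y,y_a,z,z_a,v,v_d)$ furnishes the first-order inequalities linking the increments $\gamma(Y^v(0))-\gamma(Y^u(0))$ and $H^v-H^u$ to their respective first-order expansions, the latter being $\langle H^u_y,\tilde{Y}\rangle+\langle H^u_{y_a},\tilde{Y}_a\rangle+\langle H^u_z,\tilde{Z}\rangle+\langle H^u_{z_a},\tilde{Z}_a\rangle+\langle H^u_v,\hat{v}\rangle+\langle H^u_{v_d},\hat{v}_d\rangle$. Substituting the duality identity from the previous step, all the state and anticipation first-order terms cancel, exactly as the $l$- and $f$-terms cancelled in the necessary-condition derivation, and $J(v)-J(u)$ is reduced to the control contribution $E[\int_0^T(\langle H^u_v,\hat{v}\rangle+\langle H^u_{v_d},\hat{v}_d\rangle)dt]$. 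A final Fubini step (again borrowed from Theorem \ref{necessary SMP}) rewrites this as $E[\int_0^T\langle H^u_v(t)+E^{\mathcal{F}_t}[\int_0^{+\infty}\phi(t,t+\theta)H^u_{v_d}(t+\theta)\alpha(d\theta)],\hat{v}(t)\rangle dt]$, which is nonnegative by the necessary condition (\ref{integral neccesarry condition})–(\ref{condition neccesary SMP}) applied pointwise with $\hat{v}=v-u$. Hence $J(v)\ge J(u)$ for every admissible $v$, so $u(\cdot)$ is optimal.

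The main obstacle I anticipate is the rigorous justification of the interchange of expectation, time-integration and the $\sigma$-finite measure $\alpha$ in the Fubini shifts, together with the legitimacy of Itô's formula over $[0,T]$ for these infinite-horizon processes: one must exploit the integrability furnished by $p\in\mathcal{M}_\mathcal{F}^2(-\infty,T;\mathbb{R}^d)$ from Theorem \ref{well-posedness ISDDE}, by $(Y^v,Z^v)\in\mathcal{S}_\mathcal{F}^2(0,+\infty;\mathbb{R}^d)\times\mathcal{M}_\mathcal{F}^{2,\beta}(0,+\infty;\mathbb{R}^{d\times m})$ from Theorem \ref{well-posedness IABSDE}, and by the boundedness of $\phi$ and $|\alpha|$ in (H2.1), so that every stochastic integral is a true martingale and only the boundary terms at $0$ and $T$ survive. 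The second delicate point is the sign bookkeeping dictated by the backward state dynamics, the convention $H=l-\langle p,f\rangle$ and the initial condition $p(0)=-\gamma_y(Y^u(0))$; this is settled entirely by reusing the duality identities already verified in the necessary-conditions proof, so that (H3.3) enters in precisely the direction needed to fix the sign of $J(v)-J(u)$.
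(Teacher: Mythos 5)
Your proposal is correct and follows essentially the same route as the paper's proof: the decomposition via $l=H+\langle p,f\rangle$, Itô's formula applied to $\langle p(t),\,Y^v(t)-Y^u(t)\rangle$ on $[0,T]$ with the boundary term $\gamma_y(Y^u(0))(Y^v(0)-Y^u(0))$, the Fubini time-shift identities recycled from the necessary-conditions argument to pair the memory integrals in $p$ with the anticipation terms, the first-order inequalities from (H3.3), and the maximum condition to conclude $J(v)-J(u)\geqslant 0$. The only deviation is organizational — you run the Itô/duality step before invoking concavity, whereas the paper bounds $\nabla_1$ and $\nabla_2$ separately and then cancels — which does not change the substance of the argument.
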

		\begin{proof} Take an arbitrary $v(\cdot)\in\mathcal{U}_{ad}$, let $Y^v(t),Z^v(t)$ be the corresponding trajectory. For simplicity, we denote
			$$
			\zeta^v(t)=\left(Y^v(t),Y^v_{a}(t),Z^v(t),Z^v_{a}(t)\right),\quad \zeta^u(t)=\left(Y^u(t),Y^u_{a}(t),Z^u(t),Z^u_{a}(t)\right).
			$$
			We need to prove
			$$
			J\left(v(\cdot)\right) - J\left(u(\cdot)\right)= \nabla_1 + \nabla_2 \geqslant 0,
			$$
			where 
			$$
			\begin{aligned} 	\nabla_1&:=E\left[\int_{0}^{T}\left\{l\left(t,\zeta^v(t),v(t),v_d(t)\right) - l\left(t,\zeta^u(t),u(t),u_d(t)\right)\right\}dt\right],\\  
				\nabla_2&:=E\left[\gamma\left(Y^v(0)\right) - \gamma\left(Y^u(0)\right)\right].
			\end{aligned}
			$$
			First we consider $\nabla_1$.
			$$
			\begin{aligned}
				\nabla_1	&=E\left[\int_{0}^{T}\left\{H\left(t,\zeta^v(t),v(t),v_d(t),p(t)\right) - H\left(t,\zeta^u(t),u(t),u_d(t),p(t)\right)\right\}dt\right]\\
				&\quad+E\left[\int_{0}^{T}\left\langle p(t), f\left(t,\zeta^v(t),v(t),v_d(t)\right) - f\left(t,\zeta^u(t),u(t),u_d(t)\right)\right\rangle dt\right].\\
			\end{aligned}
			$$
			Since $\left(\zeta, v, v_d\right) \rightarrow H\left(t, \zeta, v, v_d, p(t)\right)$ is concave, we have
	\begin{align*}
	\nabla_1 \geqslant\;&
	E\!\left[\int_{0}^{T}\!\Big\{
	\langle H^u_y(t), Y^v(t)-Y^u(t)\rangle
	+\langle H^u_{y_a}(t), Y_a^v(t)-Y_a^u(t)\rangle
	\Big\}dt \right]  \nonumber\\
	&+E\!\left[\int_{0}^{T}\!\Big\{
	\langle H^u_z(t), Z^v(t)-Z^u(t)\rangle
	+\langle H^u_{z_a}(t), Z_a^v(t)-Z_a^u(t)\rangle
	\Big\}dt \right] \nonumber\\
	&+E\!\left[\int_{0}^{T}\!\Big\{
	\langle H^u_v(t), v(t)-u(t)\rangle
	+\langle H^u_{v_d}(t), v_d(t)-u_d(t)\rangle
	\Big\}dt \right] \nonumber\\
	&+E\!\left[\int_{0}^{T}\!\left\langle p(t),
	f\!\left(t,\zeta^v(t),v(t),v_d(t)\right)
	-f\!\left(t,\zeta^u(t),u(t),u_d(t)\right)
	\right\rangle dt\right] \nonumber\\
	=\;&
	E\!\left[\int_{0}^{T}\!\left\langle
	H^u_y(t)+\int_{0}^{+\infty}\!\phi(t-\theta,t)H^u_{y_a}(t-\theta)\alpha(d\theta),
	Y^v(t)-Y^u(t)\right\rangle dt\right] \nonumber\\
	&+E\!\left[\int_{0}^{T}\!\left\langle
	H^u_z(t)+\int_{0}^{+\infty}\!\phi(t-\theta,t)H^u_{z_a}(t-\theta)\alpha(d\theta),
	Z^v(t)-Z^u(t)\right\rangle dt\right] \displaybreak[2]\\
	&+E\!\left[\int_{0}^{T}\!\left\langle
	H^u_v(t)+E^{\mathcal{F}_t}\!\left[\int_{0}^{+\infty}\!\phi(t,t+\theta)H^u_{v_d}(t+\theta)\alpha(d\theta)\right],
	v(t)-u(t)\right\rangle dt\right] \nonumber\\
	&+E\!\left[\int_{0}^{T}\!\left\langle p(t),
	f\!\left(t,\zeta^v(t),v(t),v_d(t)\right)
	-f\!\left(t,\zeta^u(t),u(t),u_d(t)\right)
	\right\rangle dt\right]. \nonumber
\end{align*}
			We next consider $\nabla_2$. By the concavity of $\gamma$ and applying Itô's formula to $\left\langle p(t), Y^v(t)-Y^u(t)\right\rangle$ on $[0,T]$, we have 
			$$
			\begin{aligned}
				\nabla_2 \geqslant & E\left[\gamma_y\left(Y^u(0)\right)\cdot\left(Y^v(0)-Y^u(0)\right)\right] \\
				=&-E\left[\int_{0}^{T} \left\langle H^u_y\left(t\right) + \int_{0}^{+\infty}\phi(t-\theta,t)H^u_{y_{a}}\left(t-\theta\right)\alpha(d\theta), Y^v(t)-Y^u(t)\right\rangle  dt \right]  \\
				&-E\left[\int_{0}^{T} \left\langle H^u_z\left(t\right) + \int_{0}^{+\infty}\phi(t-\theta,t)H^u_{z_{a}}\left(t-\theta\right)\alpha(d\theta), Z^v(t)-Z^u(t)\right\rangle  dt \right] \\  
				&-E\left[\int_{0}^{T}\left\langle p(t), f\left(t,\zeta^v(t),v(t),v_d(t)\right) - f\left(t,\zeta^u(t),u(t),u_d(t)\right)\right\rangle dt\right].
			\end{aligned}
			$$
			Finally, maximum condition (\ref{condition neccesary SMP}) or (\ref{SMP without state constrain}) implies 
			$$
			\begin{aligned}
				E\left[\int_{0}^{T}\left\langle H^u_v(t) + E^{\mathcal{F}_t}\left[ \int_{0}^{+\infty}\phi(t,t+\theta)H^u_{v_{d}}(t+\theta)\alpha(d\theta)\right],v(t)-u(t)\right\rangle dt\right] \geqslant 0
			\end{aligned}
			$$
			Combining all the estimates above, we can obtain 
			$$
			J\left(v(\cdot)\right) - J\left(u(\cdot)\right)= \nabla_1 + \nabla_2 \geqslant 0.
			$$
			Since $v(\cdot)\in \mathcal{U}_{ad}$ is arbitrary, $u(\cdot)$ is an optimal control.
		\end{proof}
		
		\begin{corollary}\label{corollary}
			Let us consider the case when the measure $\alpha$ is a Dirac measure at $\delta$. This corresponds to the situation where the anticipation and the delay is point-wise, i.e. $y_a(t) = y(t + \delta)$,  $z_a(t) = z(t + \delta)$, $v_d(t) = v(t - \delta)$. In this case, the system (\ref{state equation}) reduces to involve only the point-wise delay, and the sufficient condition of optimality becomes 
			$$
			\begin{aligned}
				H_v(t, \zeta^u(t),u(t),u(t-\delta),p(t)) + E^{\mathcal{F}_t}\left[ H_{v_{d}}(t+\delta, \zeta^u(t+\delta),u(t+\delta),u(t),p(t+\delta))\right]= 0,
			\end{aligned}
			$$
			where $\zeta^u(t)=\left(Y^u(t), E^{\mathcal{F}_t}\left[Y^u(t+\delta)\right],Z^u(t), E^{\mathcal{F}_t}\left[Z^u(t+\delta)\right]\right)$. The adjoint equation is given by
			\begin{equation}\nonumber
				\left\{\begin{aligned}
					dp(t) =& \left\{ -H_y\bigl(t, \zeta^u(t),u(t),u(t-\delta),p(t)\bigr) \right. \\
					&\left. -H_{y_a}\bigl(t-\delta, \zeta^u(t-\delta),u(t-\delta),u(t-2\delta),p(t-\delta)\bigr) \right\} dt \\
					&+ \left\{ -H_z\bigl(t, \zeta^u(t),u(t),u(t-\delta),p(t)\bigr)
					\right. \\
					&\left. -H_{z_a}\bigl(t-\delta, \zeta^u(t-\delta),u(t-\delta),u(t-2\delta),p(t-\delta)\bigr) \right\} dW(t), \quad t\in[0,T]; \\
					p(0) =& -\gamma_y\bigl(Y^u(0)\bigr), \quad p(t)=0, \quad t\in[-\delta,0).
				\end{aligned}\right.
			\end{equation}
		\end{corollary}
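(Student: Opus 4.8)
The plan is to derive the corollary as a direct specialization of Theorem~\ref{necessary SMP} and Theorem~\ref{sufficient SMP}, substituting the Dirac measure $\alpha = \delta_{\delta}$ together with $\phi(\cdot,\cdot) \equiv 1$ into the anticipation and delay operators, the optimality condition, and the adjoint equation. First I would verify that under this choice the integral operators collapse to point evaluations: since integrating against a Dirac mass at $\delta$ selects the value at $\theta = \delta$,
$$
Y_a(t) = E^{\mathcal{F}_t}\left[\int_0^{+\infty} \phi(t, t+\theta)\, Y(t+\theta)\, \alpha(d\theta)\right] = E^{\mathcal{F}_t}\left[Y(t+\delta)\right],
$$
and likewise $Z_a(t) = E^{\mathcal{F}_t}[Z(t+\delta)]$ and $v_d(t) = v(t-\delta)$. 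This shows that the controlled system (\ref{state equation}) reduces to one with point-wise anticipation and delay, and that $\zeta^u(t)$ takes the asserted form $\bigl(Y^u(t), E^{\mathcal{F}_t}[Y^u(t+\delta)], Z^u(t), E^{\mathcal{F}_t}[Z^u(t+\delta)]\bigr)$.

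Next I would substitute $\alpha = \delta_{\delta}$ into the anticipated Hamiltonian term of the optimality condition (\ref{condition neccesary SMP}), which yields
$$
E^{\mathcal{F}_t}\left[\int_0^{+\infty} \phi(t, t+\theta)\, H^u_{v_d}(t+\theta)\, \alpha(d\theta)\right] = E^{\mathcal{F}_t}\left[H_{v_d}\bigl(t+\delta, \zeta^u(t+\delta), u(t+\delta), u(t), p(t+\delta)\bigr)\right],
$$
where I have used that the delayed control argument of $H_{v_d}$ evaluated at time $t+\delta$ equals $u\bigl((t+\delta)-\delta\bigr) = u(t)$. In the unconstrained convex setting (equivalently, when the optimal control lies in the interior of $U$), minimizing the linear functional in (\ref{SMP without state constrain}) over $v \in U$ forces the bracketed gradient to vanish, which is precisely the stated stationarity relation.

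Finally, for the adjoint equation I would perform the same substitution in the Hamiltonian form of (\ref{adjoint equation}). Each convolution integral of the type $\int_0^{+\infty} \phi(t-\theta, t)\, H^u_{y_a}(t-\theta)\, \alpha(d\theta)$ collapses to $H^u_{y_a}(t-\delta) = H_{y_a}\bigl(t-\delta, \zeta^u(t-\delta), u(t-\delta), u(t-2\delta), p(t-\delta)\bigr)$, and similarly for the $H_{z_a}$ term, producing exactly the drift and diffusion coefficients displayed in the corollary. The only point demanding care is the reduction of the initial history: because the delayed terms involve only the single lag $\delta$, the equation is determined by the values of $p$ on $[-\delta, 0)$, so the condition $p(t) = 0$ for $t \in (-\infty, 0)$ in (\ref{adjoint equation}) may be shortened to $p(t) = 0$ for $t \in [-\delta, 0)$ without affecting the solution, and well-posedness of the reduced equation continues to follow from Theorem~\ref{well-posedness ISDDE}. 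I expect this time-shift and support bookkeeping to be the only genuine subtlety; the rest is a routine transcription of the general formulas.
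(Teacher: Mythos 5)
Your proposal is correct and is exactly the route the paper takes: the corollary is stated without proof as a direct specialization of Theorems \ref{necessary SMP} and \ref{sufficient SMP} obtained by setting $\alpha=\delta_{\delta}$ and $\phi\equiv 1$, so the integral operators collapse to point evaluations and the time-shift/support bookkeeping (including $p\equiv 0$ on $[-\delta,0)$ and the vanishing of the anticipated term for $t>T-\delta$) is all that remains. Your remark that the equality form of the optimality condition requires the minimizer of the linear functional in (\ref{SMP without state constrain}) to be interior to $U$ is a point the paper leaves implicit, so if anything you are slightly more careful than the source.
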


		\section{Applications} \label{chapter applications}
		
		In this section, we present three examples to illustrate the
		applicability of the stochastic maximum principle developed in Section \ref{section 3}.
		We begin with a climate policy model involving infinite anticipation and
		infinitely delayed control of exponential weighted type in Section \ref{section example carbon}, highlighting the economic relevance of long-term memory and forward-looking mechanisms.
		We then study a dynamic consumption problem with uniformly-weighted anticipation in Section \ref{section example 1}. 
		In Section \ref{section example 2}, we deal with a Linear Quadratic (LQ)  control problem with point-wise anticipated states and delayed controls.
		In all cases, the optimal controls are derived explicitly.

		
		%
		\subsection{A Climate Policy Model.}\label{section example carbon}
		
		We consider a climate policy optimization problem in which a social planner designs an environmental regulation, such as a carbon tax, in order to mitigate the long-run social costs of carbon emissions. This example comes from \cite{carbon} and the references therein.

		Let $u(\cdot)$ denote the climate policy, and let $Y(\cdot)$ represent a social cost index aggregating the expected future damages of climate change. 
		Set the anticipative term
		$$
		Y_a(t)
		=E^{\mathcal{F}_t}\!\left[\int_0^{+\infty} e^{-\lambda\theta}Y(t+\theta)\,d\theta\right],
		$$
		with the decay rate $\lambda>0$. 
		Similarly, it is reasonable to assume that the climate policy has delayed and persistent effects, we thus introduce the exponentially weighted delayed control
		$$
		u_d(t)
		=E^{\mathcal{F}_t}\!\left[\int_0^{+\infty} e^{-\mu\theta}u(t-\theta)\,d\theta\right],
		$$
		with $\mu>0$ describing the speed at which past policy loses its effectiveness.
		The admissible control set is $\mathcal{U}_{a d} \subset M_{\mathcal{F}}^2(0, T ; \mathbb{R})$, with a prescribed history of past policy $u(t)=u_0(t)$ for $t \leqslant 0$.
		
		The dynamics of social cost index $Y(\cdot)$ are described by the following infinitely anticipated BSDE:
		\begin{equation}\label{eq:climate-IABSDE}
			\left\{
			\begin{aligned}
				-dY(t) &= \big(\kappa + \beta Y_a(t) - \eta u_d(t)\big)\,dt - Z(t)\,dW(t), 
				\qquad t\in[0,T],\\
				Y(t) &= \bar Y,\quad Z(t)=0,\qquad t\in[T,+\infty).
			\end{aligned}
			\right.
		\end{equation}
		
		Economically, $\kappa>0$ represents the baseline growth rate of social cost, $\eta>0$ measures the effectiveness of climate policy, and $\beta>0$ captures the feedback effect of anticipated future damages: when the expected long-run damages are large, current social cost increases more rapidly.
		And the constant $\bar Y>0$ denotes a reference level of social cost in the far future.
		
		The planner aims to balance the policy implementation costs against the overall social cost evaluated at the initial time. 
		The objective is to minimize the performance functional
		\begin{equation}\label{eq:climate-cost}
			J(u(\cdot))
			=E\left[\int_0^T \frac{R(t)}{\theta}e^{\theta u(t)}\,dt
			+\gamma\big(Y(0)\big)\right],
		\end{equation}
		where $R(t)>0$ is a deterministic weighting function, $\theta>0$ controls the curvature of policy costs, and $\gamma(\cdot)$ is a continuously differentiable function. 
		The running cost of CARA-form reflects the rapidly increasing marginal cost of aggressive policy interventions.

		The Hamiltonian function associated with \eqref{eq:climate-IABSDE}--\eqref{eq:climate-cost} is given by
		$$
		H(t,Y_a,u,u_d,p)
		=\frac{R(t)}{\theta}e^{\theta u(t)}
		-p(t)\big(\kappa+\beta Y_a(t)-\eta u_d(t)\big).
		$$
		
		The adjoint process $p(\cdot)$ satisfies the following infinitely delayed SDE:
		\begin{equation}\label{eq:climate-adjoint}
			\left\{
			\begin{aligned}
				dp(t)
				&=\left[\int_0^{+\infty}\beta e^{-\lambda\theta}p(t-\theta)\,d\theta\right]dt,
				\qquad t\in[0,T],\\
				p(0)&=-1,\qquad
				p(t)=0,\ t<0.
			\end{aligned}
			\right.
		\end{equation}
		
		Under the assumptions of Theorem \ref{well-posedness ISDDE}, equation \eqref{eq:climate-adjoint} admits a unique solution in
		$\mathcal{M}_{\mathcal{F}}^2(-\infty,T;\mathbb{R})$. Then we have the following result for the explicit solution of optimal control.
		
		\begin{proposition}
			Let $p(\cdot)$ be the solution of the adjoint equation \eqref{eq:climate-adjoint}. Define  
			$$
			\Pi(t)
			=E^{\mathcal{F}_t}\!\left[\int_0^{+\infty}
			e^{-\mu\theta}p(t+\theta)\,d\theta\right].
			$$
			Assume that $\Pi(t)<0$ a.s. for all $t\in[0,T]$. 
			Then the optimal climate policy for problem
			\eqref{eq:climate-IABSDE}--\eqref{eq:climate-cost} is given by
			\begin{equation}\label{eq:optimal-climate-control}
				u^*(t)
				=\frac{1}{\theta}\ln\!\left(-\frac{\eta}{R(t)}\Pi(t)\right),
				\qquad t\in[0,T].
			\end{equation}
		\end{proposition}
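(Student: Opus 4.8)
The plan is to specialize the general maximum principle of Section \ref{section 3} to the data of the climate model and then read the optimal policy off the resulting stationarity relation. First I would identify the running cost $l(t,u)=\frac{R(t)}{\theta}e^{\theta u}$ and the generator $f(t,Y_a,u_d)=\kappa+\beta Y_a-\eta u_d$, so that $H$ coincides with the Hamiltonian displayed before the proposition, with $\partial H/\partial u=R(t)e^{\theta u}$, $\partial H/\partial u_d=\eta p$, $\partial H/\partial Y_a=-\beta p$, and $\partial H/\partial Y=\partial H/\partial Z=\partial H/\partial Z_a=0$ (since $f$ and $l$ involve neither $Y$, $Z$, nor $Z_a$). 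Feeding these into the general adjoint equation \eqref{adjoint equation} makes the diffusion coefficient vanish and collapses the drift to $\int_0^{+\infty}\beta e^{-\lambda\theta}p(t-\theta)\,d\theta$, which is exactly \eqref{eq:climate-adjoint}; the initial value $p(0)=-1$ is the specialization of $p(0)=-\gamma_y(Y^u(0))$ under the normalization $\gamma_y(Y^u(0))=1$. Since the exponential discount is absorbed into a finite measure $\alpha(d\theta)=e^{-\lambda\theta}\,d\theta$ with bounded density, (H2.1) holds and Theorem \ref{well-posedness ISDDE} yields a unique solution $p$; as no stochastic integral appears, $p$—and hence $\Pi$—is deterministic.

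Next I would apply the optimality condition. Because $U=\mathbb{R}$, the minimization in \eqref{SMP without state constrain} runs over a linear functional of $v$, so its coefficient must vanish along the optimum:
\begin{equation}\nonumber
H^u_v(t)+E^{\mathcal{F}_t}\!\left[\int_0^{+\infty}e^{-\mu\theta}H^u_{v_d}(t+\theta)\,d\theta\right]=0 .
\end{equation}
Inserting $H^u_v(t)=R(t)e^{\theta u(t)}$ and $H^u_{v_d}=\eta p$, and recalling that $H_{v_d}$ is extended by zero on $[T,+\infty)$ as in the proof of Theorem \ref{necessary SMP}, the second term is $\eta\,\Pi(t)$ with $\Pi$ as in the statement. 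The condition thus reduces to the pointwise scalar equation $R(t)e^{\theta u(t)}+\eta\Pi(t)=0$. Under the standing hypothesis $\Pi(t)<0$ the quantity $-\eta\Pi(t)/R(t)$ is strictly positive, so the logarithm is well defined and solving for $u(t)$ gives exactly \eqref{eq:optimal-climate-control}.

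To upgrade this critical policy to a genuine minimizer I would invoke the sufficiency result, Theorem \ref{sufficient SMP}, by verifying the convexity structure that underlies the comparison in its proof. The generator $f$ is affine in $(Y_a,u_d)$ and the only nonlinear term of $H$ is $\frac{R(t)}{\theta}e^{\theta u}$, which is convex in $u$; hence $H$ is convex in $(Y,Y_a,Z,Z_a,u,u_d)$, and, with $\gamma$ handled by the same convexity, the estimate $J(v)-J(u^*)\geqslant 0$ goes through while $u^*$ satisfies the stationarity identity just derived. A short admissibility check—$p$ is deterministic and square-integrable on $[0,T]$ by Theorem \ref{well-posedness ISDDE} and $R^{-1}$ is bounded, so $E\int_0^T|\ln(-\eta\Pi(t)/R(t))|^2\,dt<+\infty$ and $u^*\in\mathcal{U}_{ad}$—then completes the verification.

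I expect the main obstacle to be the careful treatment of $\Pi(t)$: one must pin down the value of $p(t+\theta)$ for $t+\theta>T$ (equivalently, justify the truncation induced by $H_{v_d}\equiv 0$ on $[T,+\infty)$), show that $\Pi$ is well defined and finite, and confirm that the sign hypothesis $\Pi(t)<0$ is consistent with the adjoint dynamics \eqref{eq:climate-adjoint}. The remaining steps—the partial derivatives of $H$, the collapse of the adjoint equation, and the integrability of $u^*$—are routine.
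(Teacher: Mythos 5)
Your proposal is correct and follows essentially the same route as the paper: the paper's proof simply invokes the stationarity condition from Theorem \ref{sufficient SMP}, computes $H_u(t)=R(t)e^{\theta u(t)}$ and $H_{u_d}(t)=\eta p(t)$, and solves $R(t)e^{\theta u(t)}+\eta\,\Pi(t)=0$ under the sign assumption $\Pi(t)<0$, exactly as you do. Your supplementary checks (collapsing the general adjoint equation to \eqref{eq:climate-adjoint}, the convexity of $H$ underpinning sufficiency, admissibility of $u^*$, and the treatment of $p(t+\theta)$ for $t+\theta>T$) are all left implicit in the paper; the only caveat is that you verify convexity of $H$ where (H3.3) literally states concavity, which is in fact what the proof of Theorem \ref{sufficient SMP} actually uses for this minimization problem, so your reading is the consistent one.
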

		\begin{proof}
			By Theorem \ref{sufficient SMP}, we know that the optimal control satisfies the maximum condition
			$$
			H_u(t)
			+E^{\mathcal{F}_t}\!\left[\int_0^{+\infty}e^{-\mu\theta}
			H_{u_d}(t+\theta)\,d\theta\right]=0,
			\qquad \text{a.e. }t\in[0,T].
			$$
			Straightforward computations yield
			$$
			H_u(t)=R(t)e^{\theta u(t)},\qquad
			H_{u_d}(t)=\eta p(t).
			$$
			Substituting the expressions of $H_u$ and $H_{u_d}$, we obtain
			$$
			R(t)e^{\theta u(t)}
			+\eta\,E^{\mathcal{F}_t}\!\left[\int_0^{+\infty}
			e^{-\mu\theta}p(t+\theta)\,d\theta\right]=0.
			$$
			Provided that $\Pi(t)<0$ a.s., the optimal policy admits the explicit representation (\ref{eq:optimal-climate-control}).
		\end{proof}

		%
		%
		\begin{remark}
			This example illustrates the essential feature of the proposed framework:
			both anticipation and delay are modeled through exponentially weighted
			integral operators over unbounded horizons.
			Despite the infinite-dimensional structure,
			the stochastic maximum principle remains applicable
			and yields an explicit expression of the optimal policy.
		\end{remark}

		\subsection{Dynamic Consumption Optimization.}\label{section example 1}
		In this subsection, we investigate a dynamic consumption problem with recursive utility of uniformly-weighted type. The state process evolves according to the following anticipated BSDE:
		\begin{equation}\label{application 1 state}
			Y(t)=\xi+\int_t^T\left\{a c(s)+b E^{\mathcal{F}_s}\left[\int_s^T Y(r) d r\right]\right\} d s-\int_t^T Z(s) d W(s),
		\end{equation}
		where $a,b >0$ are given constants and $c(\cdot)$ denotes the instantaneous consumption rate without delay effect. Let $\mathcal{C}:=\left\{c(\cdot) \in \mathcal{M}_{\mathcal{F}}^2(0, T ; \mathbb{R})\right\}$ be the admissible control set. We adopt the CRRA utility function $U(x)=\frac{x^{1-\rho}}{1-\rho}$ with the relative risk aversion coefficient $\rho\in(0,1)$, and the objective is to minimize the recursive cost functional over $c(\cdot)\in \mathcal{C}$:
		\begin{equation}\label{application 1 cost functional}
			J(c(\cdot))=-E\left[\int_0^T U(c(t)) d t\right]+Y(0).
		\end{equation}
		
		Related examples can be found in \cite{BSDDE-12.delay-1type-1, Delong3}, where similar models are employed to describe the dynamic optimization problems for recursive utility of uniformly-weighted type.
		Note that the state dynamics (\ref{application 1 state}) can be reformulated as
		$$
		Y(t)=\xi+\int_t^T\left\{a c(s)+ bT E^{\mathcal{F}_s}\left[\int_{0}^T Y(s+\theta) I_{[0 ,T-s]}(\theta) \alpha(d \theta)\right]\right\} d s-\int_t^T Z(s) d W(s),
		$$
		where $\alpha$ is the uniform measure on $[0,T]$. In this case, the Hamiltonian function becomes
		$$
		H\left(t, Y_a, c, p\right)=-U(c(t))-\left\{a c(t)+bTE^{\mathcal{F}_s}\left[ \int_{0}^T Y(s+\theta) I_{[0 ,T-s]}(\theta) \alpha(d \theta)\right]\right\} p(t),
		$$
		where $p(t)$ satisfies the following adjoint equation:
		\begin{equation}\label{application 1 adjoint equation}
			\begin{aligned}
				d p(t)=  \left(\int_0^T bT p(t-\theta)I_{[0,t]}(\theta) \alpha(d\theta)\right) d t
				=  \left(\int_0^t b p(s)ds\right) d t,  \quad t\in[0,T],
			\end{aligned}
		\end{equation}
		with the initial condition $p(0)=-1$. The adjoint equation (\ref{application 1 adjoint equation}) is an ordinary differential equation (ODE) which admits a unique solution $p(t)=-\frac{1}{2}\left(e^{\sqrt{b} t}+e^{-\sqrt{b} t}\right)=-\cosh (\sqrt{b} t)$ for $t\in [0,T]$. Then from Theorem \ref{sufficient SMP}, we have the following result:
		\begin{proposition}
			For dynamic consumption optimization problem (\ref{application 1 state})–(\ref{application 1 cost functional}), the optimal consumption is $c^{*}(t)=\left(-ap(t)\right)^{-1/\rho}=\left(a\cosh (\sqrt{b} t)\right)^{-1/\rho}$ where $p(t)$ satisfies (\ref{application 1 adjoint equation}).
		\end{proposition}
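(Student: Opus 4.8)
The plan is to obtain the optimal consumption as a direct application of the sufficient maximum principle, Theorem~\ref{sufficient SMP}, so the bulk of the work is to check that the present problem falls within its hypotheses and then to convert the resulting maximum condition into an explicit pointwise formula. First I would record the structural data: here $\gamma(y)=y$ is affine, and since the generator $f(t,Y_a,c)=ac+bY_a$ does not depend on $Y$, $Z$, $Z_a$, or the delayed control, the Hamiltonian $H(t,Y_a,c,p)=-U(c)-p(ac+bY_a)$ separates into a term $-U(c)-apc$ in the control plus a term affine in $Y_a$. Because $\rho\in(0,1)$ makes $U$ concave, the map $c\mapsto -U(c)$ carries exactly the curvature demanded by (H3.3), while affine dependence on $Y_a$ is harmless; hence the convexity/concavity hypothesis of Theorem~\ref{sufficient SMP} holds. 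The regularity in (H3.1)--(H3.2) follows from the smoothness of $f$ and $U$ on the relevant range, the only delicate point being that $U'(c)=c^{-\rho}$ blows up as $c\downarrow 0$; since the candidate will turn out to be bounded away from zero, I would simply restrict attention to that range.

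Next I would evaluate the maximum condition (\ref{SMP without state constrain}). The key simplification is that neither $f$ nor $l$ contains the delayed control $v_d$, so $H_{v_d}\equiv 0$ and the anticipated term $E^{\mathcal{F}_t}[\int_0^{+\infty}\phi\,H_{v_d}\,\alpha(d\theta)]$ vanishes identically. Since the admissible set is all of $\mathcal{M}_{\mathcal{F}}^2(0,T;\mathbb{R})$ and the optimum is interior, the variational condition collapses to the pointwise first-order equation $H_c(t)=0$, i.e. $-U'(c(t))-a\,p(t)=0$. Solving $c(t)^{-\rho}=-a\,p(t)$ then yields $c^*(t)=(-a\,p(t))^{-1/\rho}$.

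Finally I would substitute the adjoint solution. Differentiating the adjoint ODE (\ref{application 1 adjoint equation}) gives $p''=bp$ with $p(0)=-1$ and $p'(0)=0$, whose unique solution is $p(t)=-\cosh(\sqrt{b}\,t)$, as already recorded before the statement. Since $p(t)<0$ and $a>0$, the quantity $-a\,p(t)=a\cosh(\sqrt{b}\,t)$ is strictly positive, so the power is well defined and $c^*(t)=(a\cosh(\sqrt{b}\,t))^{-1/\rho}$ is a continuous, strictly positive, and bounded function of $t\in[0,T]$; in particular it is admissible. Theorem~\ref{sufficient SMP} then certifies its optimality.

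The main obstacle is not computational but verificational: I must confirm that the CRRA data genuinely meet the convexity requirement behind the sufficiency theorem (which hinges precisely on $\rho\in(0,1)$ keeping $-U$ convex), and that the sign of the adjoint process keeps $-a\,p(t)$ strictly positive so that the candidate is real and lies in the admissible class. Both reduce to elementary sign and curvature checks once the separated form of $H$ and the closed form of $p$ are in hand, after which the explicit representation follows immediately.
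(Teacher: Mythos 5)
Your proposal is correct and takes essentially the same route as the paper: the first-order condition $H_c(t)=-c(t)^{-\rho}-a\,p(t)=0$ from the sufficient maximum principle (with the delayed-control term absent, so the anticipated correction vanishes), combined with the closed form $p(t)=-\cosh(\sqrt{b}\,t)$ of the adjoint ODE, which you re-derive correctly via $p''=bp$, $p(0)=-1$, $p'(0)=0$; you are in fact more careful than the paper about interiority of the optimum, the sign of $-a\,p(t)$, and the singularity of $U'$ at zero. The one wrinkle is your claim that $-U$ "carries exactly the curvature demanded by (H3.3)": as literally stated, (H3.3) asks for \emph{concavity} of $H$, whereas here $c\mapsto H(t,Y_a,c,p)$ is convex because $-U$ is convex; this is an inconsistency in the paper's own hypothesis rather than in your argument (the sufficiency proof of the minimization problem actually uses convexity of $H$ and of $\gamma$ to get $\nabla_1+\nabla_2\geqslant 0$), so your verification matches what the theorem genuinely requires rather than what (H3.3) nominally says.
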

		
		\subsection{The Linear Quadratic Delayed System.}\label{section example 2}
		This subsection is devoted to applying the maximum principle to study a 1-dimensional delayed linear quadratic (LQ) problem involving both point-wise anticipated states and delayed controls. In fact, LQ problems constitute a most classical family of control models, with broad applications in practice.
		
		We consider the case when the measure $\alpha$ is the Dirac measure at $\delta$, and the controlled anticipated BSDE is given by:
		\begin{equation} \label{application 2 state}
			\left\{ 
			\begin{aligned}
				-dY(t) = &\bigg\{A(t)Y(t) + B(t)E^{\mathcal{F}_t}\left[Y(t+\delta)\right] + C(t)Z(t) + D(t)E^{\mathcal{F}_t}\left[Z(t+\delta)\right] \\
				&+ E(t)v(t) + F(t)v(t-\delta)\bigg\}dt - \hat{Z}(t)dW(t), \quad~~~~ t \in [0, T]; \\
				Y(t) = \xi&(t), Z(t) = \eta(t), \quad t \in [T, T+\delta], \quad v(t) = \varphi(t), \quad t \in [-\delta, 0].
			\end{aligned}
			\right.
		\end{equation}
		
		The cost functional is
		\begin{equation}\label{application 2 cost functional}
			J(v(\cdot))= E\left[\frac{1}{2}\int_0^T\left\{L(t) v^2(t)+\tilde{L}(t)v^2(t-\delta)\right\} dt +Y(0)\right].
		\end{equation}
		
		Our optimal control problem is to minimize (\ref{application 2 cost functional}) subject to (\ref{application 2 state}) over the admissible control set $\mathcal{U}_{a d}:=\left\{v(\cdot) \in \mathcal{M}_{\mathcal{F}}^2(0, T ; \mathbb{R}), 0 \leqslant t \leqslant T\right\} $. Here, $L(\cdot),\tilde{L}(\cdot)> 0$, and $ A(\cdot)$,$B(\cdot)$,$C(\cdot)$,$D(\cdot)$,$E(\cdot)$,$F(\cdot)$ are all bounded deterministic function defined on $[0,T]$. In particular, $L^{-1}(\cdot),\tilde{L}^{-1}(\cdot)$ are also bounded.
		By Corollary \ref{corollary}, the Hamiltonian function takes the form
		$$
		\begin{aligned}
			H&\left(t,Y, Y_a,Z,Z_a,v,v_d,p\right)=\frac{1}{2}\left(L(t) v^2(t)+\tilde{L}(t)v^2(t-\delta) \right)\\
			&- \bigg\{A(t)Y(t) + B(t)E^{\mathcal{F}_t}\left[Y(t+\delta)\right] + C(t)Z(t) + D(t)E^{\mathcal{F}_t}\left[Z(t+\delta)\right]+ E(t)v(t) + F(t)v(t-\delta)\bigg\}p(t).
		\end{aligned}
		$$
		
		The associated adjoint equation is
		\begin{equation}\label{application 2 adjoint equation}
			\left\{\begin{aligned}
				d p(t)=&\left(A(t) p(t)+B(t-\delta) p(t-\delta)\right) d t\\
				&+\left(C(t) p(t)+ D(t-\delta)p(t-\delta)\right) d W(t), \quad t\in[0,T];\\
				p(0)=&-1, \quad\quad p(t)=0, \quad t\in[-\delta,0].
			\end{aligned}\right.
		\end{equation}
		
		By Theorem \ref{well-posedness ISDDE}, the adjoint equation (\ref{application 2 adjoint equation}) has a unique solution $p(\cdot) \in \mathcal{M}_\mathcal{F}^2\left(-\delta, T; \mathbb{R}\right)$. 
		Then according to Theorem \ref{sufficient SMP} (or Corollary \ref{corollary}), we have the following result:
		\begin{proposition}
			If $p(t)$ is the solution of the adjoint equation (\ref{application 2 adjoint equation}), then the optimal control of LQ optimization problem (\ref{application 2 state})–(\ref{application 2 cost functional}) is given by 
			$$
			u^{*}(t)=\frac{E(t)p(t)+I_{[0,T-\delta]}(t)F(t+\delta)E^{\mathcal{F}_t}\left[p(t+\delta)\right]}{L(t)+I_{[0,T-\delta]}(t)\tilde{L}(t+\delta)}.
			$$
		\end{proposition}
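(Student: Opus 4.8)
The plan is to apply the point-wise specialisation of the sufficient maximum principle, namely Corollary \ref{corollary}, which is valid here because $\alpha$ is the Dirac mass at $\delta$, so that $v_d(t)=v(t-\delta)$, $Y_a(t)=E^{\mathcal{F}_t}[Y(t+\delta)]$ and $Z_a(t)=E^{\mathcal{F}_t}[Z(t+\delta)]$. First I would record that the structural hypotheses behind Theorem \ref{sufficient SMP} are in force: the generator $f$ is affine in $(Y,Y_a,Z,Z_a,v,v_d)$ with bounded deterministic coefficients, the running cost $\frac{1}{2}(L(t)v^2+\tilde L(t)v_d^2)$ is a positive-definite quadratic in the controls (since $L,\tilde L>0$), and $\gamma(y)=y$ is linear; these are precisely the convexity and regularity features the sufficiency theorem exploits. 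Moreover Theorem \ref{well-posedness ISDDE} already guarantees a unique adjoint solution $p(\cdot)\in\mathcal{M}_\mathcal{F}^2(-\delta,T;\mathbb{R})$ to (\ref{application 2 adjoint equation}). It then remains only to solve the maximum condition of Corollary \ref{corollary} for $u$.

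Next I would differentiate the Hamiltonian in its control slots, obtaining $H_v=L(t)v-E(t)p$ and $H_{v_d}=\tilde L(t)v_d-F(t)p$, so that along the optimal path
$$
H^u_v(t)=L(t)u(t)-E(t)p(t),\qquad H^u_{v_d}(t)=\tilde L(t)u(t-\delta)-F(t)p(t).
$$
Shifting the second identity forward by $\delta$ and using $u_d(t+\delta)=u(t)$ gives $H^u_{v_d}(t+\delta)=\tilde L(t+\delta)u(t)-F(t+\delta)p(t+\delta)$. Substituting into the optimality relation $H^u_v(t)+E^{\mathcal{F}_t}[H^u_{v_d}(t+\delta)]=0$ and pulling the $\mathcal{F}_t$-measurable factor $u(t)$ outside the conditional expectation (legitimate as $\tilde L,F$ are deterministic) yields, for $t\in[0,T-\delta]$,
$$
\bigl(L(t)+\tilde L(t+\delta)\bigr)u(t)=E(t)p(t)+F(t+\delta)E^{\mathcal{F}_t}[p(t+\delta)].
$$

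The remaining work is the boundary regime $t\in(T-\delta,T]$, where $t+\delta>T$. Since $v_d$ enters the cost and the Hamiltonian only over $[0,T]$, the convention $H^u_{v_d}(s)=0$ for $s\geq T$ inherited from the derivation of the maximum principle forces the anticipated term to drop out, and the optimality relation collapses to $L(t)u(t)=E(t)p(t)$. Encoding the two regimes with the indicator $I_{[0,T-\delta]}$ and dividing by the strictly positive denominator $L(t)+I_{[0,T-\delta]}(t)\tilde L(t+\delta)$ (permissible because $L,\tilde L>0$) delivers exactly the stated expression for $u^*$. I expect the only delicate point to be the careful bookkeeping of this anticipated term across $t=T-\delta$ — in particular confirming that the forward-shifted derivative $H^u_{v_d}(t+\delta)$ genuinely vanishes once $t+\delta>T$ — since this is precisely what produces the indicator functions; the remainder is the routine pointwise quadratic minimisation already justified by the convexity embedded in Theorem \ref{sufficient SMP}.
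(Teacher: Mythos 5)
Your proposal is correct and follows essentially the same route as the paper, which simply invokes Corollary \ref{corollary} and Theorem \ref{sufficient SMP}: compute $H_v=L(t)v-E(t)p$ and $H_{v_d}=\tilde L(t)v_d-F(t)p$, shift the latter by $\delta$, and solve the stationarity condition $H^u_v(t)+E^{\mathcal{F}_t}[H^u_{v_d}(t+\delta)]=0$, with the convention $H^u_{v_d}(s)=0$ for $s\geqslant T$ (used explicitly in the proof of Theorem \ref{necessary SMP}) producing the indicator functions. Your explicit treatment of the boundary regime $t\in(T-\delta,T]$ is exactly the bookkeeping the paper leaves implicit.
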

		
		We now derive an explicit solution of adjoint equation (\ref{application 2 adjoint equation}) by successive Itô's integrations over steps of length $\delta$. 
		For $t\in [0,\delta]$, (\ref{application 2 adjoint equation}) reduces to a linear SDE without delay:
		$$
		dp_0(t)=A(t)p_0(t)dt + C(t)p_0(t)dW(t),\quad p_0(0)=-1,
		$$
		which can be solved explicitly by
		$$
		p_0(t)=-\exp \left(\int_0^t A(s) d s + \int_0^t C(s) d W(s)-\frac{1}{2} \int_0^t C^2(s) d s\right)=-\Phi_0(t).
		$$
		
		Here, we define for simplicity
		$$
		\Phi_k(t)=\exp \left(\int_{k \delta}^t A(s) d s+\int_{k \delta}^t C(s) d W(s)-\frac{1}{2} \int_{k \delta}^t C^2(s) d s\right), \quad t\in[k\delta, (k+1)\delta].
		$$
		
		On the next interval $[\delta ,2\delta]$, equation (\ref{application 2 adjoint equation}) becomes
		$$
		\left\{\begin{aligned}
			d p_1(t)&=\left[A(t) p_1(t)+B(t-\delta) p_{0}(t-\delta)\right] d t+\left[C(t) p_1(t)+D(t-\delta) p_{0}(t-\delta)\right] d W(t), \quad t\in[\delta,2\delta]; \\
			p_1(t)&=p_0(t), \quad t=\delta,
		\end{aligned}\right.
		$$
		where $p_0(\cdot)$ denotes the solution of (\ref{application 2 adjoint equation}) on $[0,\delta]$. Solving the equation (\ref{application 2 adjoint equation}) on $[\delta ,2\delta]$, we get
		$$
		p_1(t)=\Phi_1(t)\left[p_1( \delta)+\int_{\delta}^t \frac{B(s-\delta) p_{0}(s-\delta)}{\Phi_1(s)} d s+\int_{ \delta}^t \frac{D(s-\delta) p_{0}(s-\delta)}{\Phi_1(s)} d W(s)\right],\quad t\in[\delta,2\delta].
		$$
		
		Iterating this procedure, for each interval $t\in [k\delta,(k+1)\delta]$, we have 
		$$
		d p_k(t)=\left[A(t) p_k(t)+B(t-\delta) p_{k-1}(t-\delta)\right] d t+\left[C(t) p_k(t)+D(t-\delta) p_{k-1}(t-\delta)\right] d W(t),
		$$
		with the initial condition $p_k(k\delta)=p_{k-1}(k\delta)$. Therefore, we obain for all $k \geqslant 1$,
		$$
		p_k(t)=\Phi_k(t)\left[p_k(k \delta)+\int_{k \delta}^t \frac{B(s-\delta) p_{k-1}(s-\delta)}{\Phi_k(s)} d s+\int_{k \delta}^t \frac{D(s-\delta) p_{k-1}(s-\delta)}{\Phi_k(s)} d W(s)\right].
		$$
		
		This provides an explicit expression for the solution of the adjoint equation (\ref{application 2 adjoint equation}).
		
		\section{Conclusion.}
		This paper studies optimal control problems for a class of stochastic systems
		governed by infinitely anticipated BSDEs
		with delayed controls. By formulating anticipation and delay terms through
		measure-valued integral operators, the proposed framework is able to capture
		both point-wise and long-term moving-average memory effects within a single setting.
		
		We first established the well-posedness of infinitely anticipated BSDEs in Theorem \ref{well-posedness IABSDE}. By
		the variational method and introducing an infinitely delayed SDE serving
		as adjoint equation, we presented the necessary and sufficient maximum principle in Theorems \ref{necessary SMP} and \ref{sufficient SMP}, respectively.
		The applicability of the theoretical results was demonstrated through several real-world examples, where the optimal controls admit explicit representations.

		Beyond the developments in this paper, several promising research directions merit further study.
		First, despite the potential usefulness of our model in applications, numerical methods for IABSDE-type systems remain largely undeveloped, thus designing efficient schemes poses a meaningful challenge.
		Second, the infinite-dimensional nature of both IABSDEs and ISDDEs suggests that an abstract infinite-dimensional control framework, such as Hilbert-space formulations, may provide new insights into dynamic programming, viscosity solutions of the associated HJB equations, and connections with evolution equations.
		\\
		\\
		\textbf{Acknowledgements.} The author wishes to thank Professor Shuzhen Yang for his insightful comments and constructive suggestions, which significantly improved the clarity and overall presentation of this manuscript.

		\bibliography{citation}
		
	\end{document}